\theoremstyle{plain}
\numberwithin{equation}{section}
\newtheorem{Theorem}{Theorem}
\newtheorem{Lemma}[Theorem]{Lemma}
\newtheorem{Proposition}[Theorem]{Proposition}
\theoremstyle{remark}
\date{}
\title[Improved asymptotics]{Improved asymptotics of
the spectral gap for the Mathieu operator}
\author{Berkay Anahtarci}
\author{Plamen Djakov}
\address{Sabanci University, Orhanli,
34956 Tuzla, Istanbul, Turkey}
 \email{berkaya@sabanciuniv.edu}
\address{Sabanci University, Orhanli,
34956 Tuzla, Istanbul, Turkey}
 \email{djakov@sabanciuniv.edu}
\begin{document}

\begin{abstract}
The  Mathieu operator
\begin{equation*}
L(y)=-y''+2a \cos{(2x)}y, \quad a\in \mathbb{C},\;a\neq 0,
\end{equation*}
considered with periodic or anti-periodic boundary conditions has,
close to $n^2$ for large enough $n$, two periodic (if $n$ is even) or
anti-periodic (if $n$ is odd) eigenvalues $\lambda_n^-$,
$\lambda_n^+$. For fixed $a$, we show that
\begin{equation*}
\lambda_n^+ - \lambda_n^-= \pm \frac{8(a/4)^n}{[(n-1)!]^2} \left[1 -
\frac{a^2}{4n^3}+ O \left( \frac{1}{n^4}\right)\right], \quad
n\rightarrow\infty.
\end{equation*}
This result extends the asymptotic formula of Harrell-Avron-Simon, by
providing more asymptotic terms.
\end{abstract}

\maketitle

\section{Introduction}

The one-dimensional Schr\"{o}dinger operator
\begin{equation}
\label{i1} L(y)=-y''+v(x)y,
\end{equation}
considered on $\mathbb{R}$ with $\pi$-periodic real-valued potential
$v\in L^2_{loc}(\mathbb{R})$, is self-adjoint, and its spectrum has a
gap-band structure (see Thm 2.3.1 in  \cite{E}, or Thm 2.1 in
\cite{MW}); namely, there are points
\begin{equation*}
\lambda_0^+ < \lambda_1^- \leq \lambda_1^+ < \lambda_2^- \leq
\lambda_2^+ < \lambda_3^- \leq \lambda_3^+ < \lambda_4^- \leq \lambda_4^+ <\cdots
\end{equation*}
such that
\begin{equation*}
Sp(L)= \bigcup_{n=1}^\infty [\lambda_{n-1}^{+},\lambda_n^{-}]
\end{equation*}
and the intervals of the spectrum are separated by the spectral gaps
\begin{equation*}
(-\infty,\lambda_0^+), (\lambda_1^-,\lambda_1^+),
\ldots,(\lambda_n^-,\lambda_n^+),\ldots.
\end{equation*}

The points $\lambda^-_n, \lambda^+_n $ could be determined as
eigenvalues of the Hill equation
\begin{equation}
\label{i02}
 - y^{\prime \prime} + v(x) y
= \lambda y,
\end{equation}
considered on $[0,\pi],$ respectively,  with periodic boundary
conditions
\begin{equation}
\label{i03}   y(0) = y(\pi), \quad y^\prime (0) = y^\prime (\pi),
\end{equation}
for even $n,$ and with anti-periodic boundary conditions
\begin{equation}
\label{i04}    y(0) =- y(\pi), \quad y^\prime (0) = - y^\prime (\pi),
\end{equation}
for odd $n.$  See basics and details in \cite{E,MW,Mar}.\\

The rate of decay of the size of the spectral gap $\gamma_n =
\lambda_n^+ - \lambda_n^- $ is closely related to the smoothness of
the potential $v.$ Here we mention only Hochstadt's result
\cite{Hoch63} that an $L^2 ([0,\pi])$-potential $v$ is in $C^\infty $
if and only if $(\gamma_n )$ decays faster than any power of $1/n, $
and Trubowitz's result \cite{Tr} that an $L^2 ([0,\pi])$-potential
$v$ is analytic if and only if $(\gamma_n )$ decays exponentially.

If $v$ is a complex-valued potential then the operator (\ref{i1}) is
non-self-adjoint, so one cannot talk about spectral gaps. Moreover,
the periodic and anti-periodic eigenvalues $\lambda^\pm_n $ are
well-defined for large $n$ (see Lemma~\ref{loc} below)  but the
asymptotics of $|\lambda_n^+ - \lambda_n^-|$ does not determine the
smoothness of $v.$ In \cite{Tk92} V. Tkachenko suggested to consider
also the Dirichlet b.v.p. $y(\pi)=y(0)=0. $  For large enough $n, $
close to $n^2 $ there is exactly one Dirichlet eigenvalue $\mu_n, $
so the {\em deviation}
\begin{equation}
\label{0.2}  \delta_n = |\mu_n - \frac{1}{2}(\lambda^+_n +
\lambda^-_n)|
\end{equation}
is well defined. Using an adequate parametrization of potentials in
spectral terms similar to Marchenko--Ostrovskii's ones
\cite{Mar,MO75} for self-adjoint operators, V. Tkachenko
\cite{Tk92,Tk94}  (see also \cite{ST}) characterized
$C^\infty$-smoothness and analyticity in terms of $\delta_n$ and
differences between critical values of Lyapunov functions and
$(-1)^n$. See further references and later results in \cite{DM3, DM5, DM15}.\\

In the case of specific potentials, like the Mathieu potential
\begin{equation}
\label{i05} v(x) = 2a \cos 2x, \quad a\neq 0, \;\text{real},
\end{equation}
or more general trigonometric polynomials
\begin{equation}
\label{i06} v(x) = \sum_{-N}^N c_k \exp (2ikx), \quad c_k =
\overline{c_{-k}}, \quad 0 \leq k \leq N < \infty,
\end{equation}
one comes to two classes of questions:

(i) Is the $n$-th spectral gap closed, i.e.,
\begin{equation}
\label{i07} \gamma_n = \lambda_n^+ - \lambda_n^- = 0,
\end{equation}
or, equivalently, is the multiplicity of $\lambda_n^+ $ equal to 2?

(ii) If $ \gamma_n \neq 0, $ could we tell more about the size of
this gap, or, for large enough $n,$ what is the asymptotic behavior
of $\gamma_n = \gamma_n (v)?$

E. L. Ince \cite{I} proved that the Mathieu-Hill operator has only
{\em simple } eigenvalues both for periodic and anti-periodic
boundary conditions, i.e., $\gamma_n \neq 0 $ for every $n \in
\mathbb{N}. $ His proof is presented in \cite{E}; see other proofs of
this fact in \cite{Hil,Mark,McL}, and further references in
\cite{E,MW58}.\\

For fixed $n$ and $a \to 0, $ D. Levy and J. Keller \cite{LK} gave an
asymptotics of the spectral gap $ \gamma_n = \gamma_n (a),\; v \in
(\ref{i05});$ namely
\begin{equation}
\label{i08} \gamma_n = \lambda_n^+ - \lambda_n^- =
\frac{8(|a|/4)^n}{[(n-1)!]^2} \left ( 1+ O(a) \right ), \quad a \to
0.
\end{equation}
Almost 20 years later, E. Harrell \cite{Har} found, up to a constant
factor, the asymptotics of the spectral gaps of the Mathieu operator
for fixed $a$ as  $ n\to \infty. $  J. Avron and B. Simon \cite{AS}
gave an alternative proof of E. Harrell's asymptotics and found the
exact value of the constant factor, which led to the formula
\begin{equation}
\label{i09} \gamma_n = \lambda_n^+ - \lambda_n^- =
\frac{8(|a|/4)^n}{[(n-1)!]^2} \left ( 1+ O\left (\frac{1}{n^2}\right
) \right ) \quad n \to \infty.
\end{equation}
Later, another proof of (\ref{i09}) was given by H. Hochstadt
\cite{Hoch84}. For general trigonometric polynomial potentials,
A. Grigis \cite{Gr} obtained a generic form of the main
term in the gap asymptotics.\\

In this paper we extend the result of Harrell-Avron-Simon and give
the following more precise asymptotics of the size of spectral gap
for the Mathieu operator (even in the case when the parameter $a$ is
a complex number):
\begin{equation}
\label{BF} \lambda_n^+ - \lambda_n^-= \pm \frac{8(a/4)^n}{[(n-1)!]^2}
\left[1 - \frac{a^2}{4n^3}+ O \left( \frac{1}{n^4}\right)\right],
\quad n\rightarrow\infty.
\end{equation}

Our approach is based on the methods developed in \cite{DM11,DM10},
where the gap asymptotics of the Hill operator with two term
potential of the form
$$ v(x) = A \cos 2x + B \cos 4x, \quad A\neq 0, \; B \neq 0
$$
was found. The same methods and similar asymptotic estimates play a
crucial role in the study of Riesz basis property of the root system
of Hill operators with trigonometric polynomial potentials (see
\cite{DM25}).\\

Let us mention  that the paper \cite{Kirac} claims to provide "the
formula which states the isolated terms of arbitrary number in the
asymptotics of the sequence $\gamma_n.$"  However, this claim is
false due to a unavoidable technical mistake (in \cite{Kirac},
formula (5) does not imply (4) for $m=k+1$ since, by Stirling's
formula, the remainder in (5) is much larger than the main term in
(4)).

To the best of our knowledge, (\ref{BF}) is the first formula that
gives more asymptotic terms than the formula of Harrell-Avron-Simon.

\section{Preliminaries}

Let $L_{Per^+}(v)$ and $L_{Per^-}(v)$ denote, respectively, the
operator (\ref{i1}) considered with periodic $(Per^+)$ or
antiperiodic $(Per^-)$ boundary conditions. Further we assume that
$v\in L^2 ([0,\pi])$ is a complex-valued potential such that
\begin{equation}
\label{p1} V(0) = \int_0^\pi v(x) dx =0.
\end{equation}
The following assertion is well-known (e.g., \cite[Proposition
1]{DM10}).
\begin{Lemma}
\label{loc} The spectra of $L_{Per^\pm} (v)$ are discrete. There is
an $N_0=N_0 (v)$ such that the union $\cup_{n>N} D_n $ of the discs
$D_n =\{z: \, |z-n^2|< 1 \}$ contains all but finitely many of the
eigenvalues of $L_{Per^\pm}.$

Moreover, for $n>N$ the disc $D_n$ contains exactly two (counted with
algebraic multiplicity) periodic (if $n$ is even) or antiperiodic (if
$n$ is odd) eigenvalues $\lambda_n^-, \lambda_n^+$  (where $Re
\,\lambda_n^- < Re \,\lambda_n^+$  or $Re \,\lambda_n^- = Re
\,\lambda_n^+$ and $Im \,\lambda_n^- \leq Im \,\lambda_n^+).$
\end{Lemma}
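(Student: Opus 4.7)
The plan is to separate discreteness of the spectrum from the two-sided localization argument, and to identify the analytic estimate that drives the latter. The free operator $L_0:=-d^2/dx^2$ with either boundary condition is self-adjoint on $L^2([0,\pi])$ with compact resolvent, with eigenvalues $\{n^2\}$ (the parity of $n$ dictated by the boundary condition), of multiplicity $2$ for $n\ge 1$ and $1$ for $n=0$. The one-dimensional Sobolev embedding $H^1\hookrightarrow L^\infty$ shows that an $L^2$ potential is $L_0$-bounded with arbitrarily small relative bound; hence $L=L_0+v$ is closed, has nonempty resolvent set, and inherits the compact resolvent, so $\sigma(L_{Per^\pm})$ is discrete.

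For the localization I would introduce the homotopy $L(t):=L_0+tv$, $t\in[0,1]$, and aim to show that each circle $C_n:=\partial D_n$ lies in the resolvent set of every $L(t)$, for all $n\ge N_0$. If so, the Riesz projection $P_n(t):=-\frac{1}{2\pi i}\oint_{C_n}(L(t)-\lambda)^{-1}\,d\lambda$ depends continuously on $t$ with constant integer rank, and since $\mathrm{rank}\,P_n(0)=2$ for $n\ge 1$ the same holds at $t=1$, so $D_n$ contains exactly two eigenvalues of $L$ counted with algebraic multiplicity. The prescribed ordering of $\lambda_n^\pm$ is then just a convention for labelling two points, and a parallel (easier) estimate on the complement of $\bigcup_n D_n$ inside any fixed large disc shows that only finitely many eigenvalues escape $\bigcup_{n>N}D_n$.

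The technical heart is the resolvent estimate on $C_n$. In the Fourier basis $\{e_m\}$ of $L_0$ the resolvent $(L_0-\lambda)^{-1}$ is diagonal with entries $(m^2-\lambda)^{-1}$; for $\lambda\in C_n$ only the two modes $m=\pm n$ are resonant (with $|m^2-\lambda|=1$), while by parity spacing every other admissible $m$ satisfies $|m^2-\lambda|\ge 4n-5$. Letting $P_n^0$ be the orthogonal projection onto $\mathrm{span}(e_n,e_{-n})$ and $Q_n^0:=I-P_n^0$, a direct Hilbert--Schmidt bound gives
\[
\bigl\|Q_n^0(L_0-\lambda)^{-1}v\bigr\|_{HS}^2 \;\le\; C\,\|v\|_2^2\sum_{m\ne\pm n}\frac{1}{|m^2-\lambda|^2} \;=\; O(1/n^2)
\]
uniformly on $C_n$. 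This makes $Q_n^0(L(t)-\lambda)Q_n^0$ invertible on $Q_n^0 H$ for $n\ge N_0$, and a Schur-complement (Lyapunov--Schmidt) reduction converts the eigenvalue equation into an analytic $2\times 2$ determinant equation $\det M_n(\lambda,t)=0$ on $D_n$, where $M_n(\lambda,0)=(n^2-\lambda)I_2$ and the $v$-induced correction is $o(1)$ uniformly in $\lambda\in C_n$ (the Schur remainder is $O(1/n)$, and $P_n^0 v P_n^0$ has entries $V(\pm 2n)\to 0$ by Bessel). Rouché's theorem applied to $\det M_n(\cdot,t)$, compared with its $t=0$ limit $(\lambda-n^2)^2$, then yields exactly two zeros in $D_n$.

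The main obstacle is precisely this two-dimensional resonance at $m=\pm n$: the global operator norm $\|(L_0-\lambda)^{-1}v\|_{\mathrm{op}}$ on $C_n$ does not tend to zero as $n\to\infty$, so one cannot simply invert $I+(L_0-\lambda)^{-1}v$ by a Neumann series on the full space. Isolating the two resonant Fourier modes through $P_n^0$ and passing to the Schur complement is what transfers the problem to a $2\times 2$ matrix in which the perturbation is genuinely small and Rouché suffices.
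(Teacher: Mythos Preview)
The paper does not prove this lemma at all: it is stated as ``well-known'' and referred to \cite[Proposition~1]{DM10}, so there is no in-paper proof to compare against. Your outline is a correct and standard way to establish the result. The discreteness argument via relative $L_0$-boundedness is fine (your use of $H^1\hookrightarrow L^\infty$ together with the interpolation $\|u\|_\infty\le\varepsilon\|u''\|_2+C_\varepsilon\|u\|_2$ gives relative bound $0$), and the localization via Riesz projections along the homotopy $L(t)=L_0+tv$ is exactly the mechanism behind such results. Your diagnosis of the obstacle---that $\|(L_0-\lambda)^{-1}v\|$ does not go to zero on $C_n$ because of the two resonant modes $\pm n$---and your remedy---splitting off $P_n^0$ and passing to the Schur complement---are both on target; note that $P_n^0 v$ and $vP_n^0$ are Hilbert--Schmidt (hence bounded) with norm $\le C\|v\|_2$ even though multiplication by $v\in L^2$ is unbounded, which is what makes the $2\times2$ reduction legitimate. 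In fact the $2\times2$ determinant equation you produce is precisely the equation \eqref{determinant S-z} that the paper records in Lemma~\ref{lem1} (citing \cite{DM15}) and then exploits throughout, so your sketch dovetails with the paper's subsequent machinery. A small redundancy: once you have the Schur complement $M_n(\lambda,t)=(n^2-\lambda)I_2+o(1)$ uniformly on $C_n$, Rouch\'e alone already gives both that $C_n$ is in the resolvent set and that there are exactly two eigenvalues in $D_n$; the separate continuity-of-rank argument is not needed.
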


In view of Lemma \ref{loc},
\begin{equation}
|\lambda_n^{\pm}-n^2|<1, \quad \text{for } n\geq N_0.
\end{equation}
 Moreover, Lemma \ref{loc} allows
us to apply the Lyapunov--Schmidt projection method and reduce the
eigenvalue equation $Ly = \lambda y $ for  $\lambda \in D_n $ to an
eigenvalue equation in the two-dimensional space $E_n^0 = \{ L^0 y =
n^2 y\}$   (see \cite[Section 2.2]{DM15}).

This leads to the following (see the formulas (2.24)--(2.30) in
\cite{DM15}).
\begin{Lemma}
\label{lem1} In the above notations, $\lambda_n^\pm=n^2 + z$, for
$|z|<1$, is an eigenvalue of $L_{Per^\pm} (v)$ if and only if $z$ is
a root of the equation
\begin{equation}
\label{determinant S-z}
\begin{vmatrix} z-S^{11} & S^{12} \\ S^{21} & z-S^{22} \end{vmatrix} =0,
\end{equation}
where $S^{11},S^{12},S^{21},S^{22}$ can be represented as
\begin{equation}\label{S_ij}
S^{ij}(n,z)= \sum_{k=0}^{\infty}S_k^{ij}(n,z), \quad i,j=1,2,
\end{equation}
with
\begin{equation}\label{S_0}
S_0^{11}= S_0^{22}=0, \quad S_0^{12}=V(-2n), \quad S_0^{21}=V(2n),
\end{equation}
and for each $k=1,2,...,$
\begin{align}
\label{S_11}  S_k^{11}(n,z) &= \sum_{j_1,...,j_k\neq\pm n}
\frac{V(-n+j_1)V(j_2-j_1)\cdots V(j_{k}-j_{k-1})V(n-j_k)}{(n^2-j_1^2+z)\cdots(n^2-j_k^2+z)},\\
\label{S_22}  S_k^{22}(n,z) &= \sum_{j_1,...,j_k\neq\pm n}
\frac{V(n+j_1)V(j_2-j_1)\cdots V(j_k -j_{k-1})V(-n-j_k)}{(n^2-j_1^2+z)\cdots(n^2-j_k^2+z)},\\
\label{S_12}   S_k^{12}(n,z) &= \sum_{j_1,...,j_k\neq\pm n}
\frac{V(-n+j_1)V(j_2-j_1)\cdots V(j_k -j_{k-1})V(-n- j_k)}{(n^2-j_1^2+z)\cdots(n^2-j_k^2+z)},\\
\label{S_21} S_k^{21}(n,z) &= \sum_{j_1,...,j_k\neq\pm n}
\frac{V(n+j_1)V(j_2-j_1)\cdots
V(j_k-j_{k-1})V(n-j_k)}{(n^2-j_1^2+z)\cdots(n^2-j_k^2+z)}.
\end{align}
The above series converge absolutely and uniformly for $|z|\leq 1$.
\end{Lemma}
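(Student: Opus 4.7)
The plan is to carry out the Lyapunov--Schmidt reduction with respect to the Fourier basis $\{e^{ikx}\}_{k\in\mathbb{Z}}$ on $[0,\pi]$. Writing $y=\sum_k \hat y(k)\,e^{ikx}$ turns the eigenvalue equation $Ly=(n^{2}+z)y$ into the discrete system
\[
(k^{2}-n^{2}-z)\,\hat y(k)+\sum_{m}V(k-m)\,\hat y(m)=0,\qquad k\in\mathbb{Z}.
\]
Let $P$ be the orthogonal projection onto the two--dimensional space $E_n^{0}=\mathrm{span}\{e^{inx},e^{-inx}\}$ and set $Q=I-P$. First I would split the system into the two ``resonant'' equations, corresponding to $k=\pm n$, and the ``non--resonant'' equations, indexed by $k\neq\pm n$.

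Next, by Lemma \ref{loc}, or equivalently by a direct estimate showing $|n^{2}-k^{2}+z|\gtrsim |n+k|$ for $k\neq\pm n$, $|z|\leq 1$ and $n\geq N_{0}$, the operator $Q(L-n^{2}-z)Q$ is boundedly invertible on $QH$. I would use this to express $\hat y(k)$ for $k\neq\pm n$ in terms of $\hat y(n)$ and $\hat y(-n)$ via the Neumann series
\[
\bigl(Q(L-n^{2}-z)Q\bigr)^{-1}=\sum_{k\geq 0}R_{0}(z)\bigl(-QV R_{0}(z)\bigr)^{k},
\]
where $R_{0}(z)=Q(L^{0}-n^{2}-z)^{-1}Q$ is diagonal with entries $1/(n^{2}-j^{2}+z)$. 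Substituting back into the two resonant equations produces a $2\times 2$ linear system in $(\hat y(n),\hat y(-n))$; the requirement that this system have a non--trivial solution is exactly (\ref{determinant S-z}). Reading off the matrix elements in the basis $\{e^{inx},e^{-inx}\}$ and keeping track of the four possible pairings of endpoints $\pm n$ produces the multilinear sums (\ref{S_11})--(\ref{S_21}). The $k=0$ terms come from the direct matrix elements of $V$ on $E_n^{0}$: the off--diagonal ones are $V(\mp 2n)$ while the diagonal ones vanish because $V(0)=0$ by (\ref{p1}).

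The main technical point, and the only non--routine one, is the absolute and uniform (in $|z|\leq 1$) convergence of the multilinear series (\ref{S_11})--(\ref{S_21}). Here I would exploit $(V(m))\in\ell^{2}$ together with the lower bound $|n^{2}-j^{2}+z|\geq c|n+j|$ for $|j|\neq n$ and $|z|\leq 1$; a successive application of the Cauchy--Schwarz inequality to each pair $V(j_{i+1}-j_{i})/(n^{2}-j_{i+1}^{2}+z)^{1/2}$ majorises the $k$-th summand of $S^{ij}$ by a quantity of order $C^{k}\|V\|_{2}^{k+1}/n^{k}$, which gives geometric convergence. Since the reduction itself is the standard Lyapunov--Schmidt scheme (\cite[Sec.~2.2]{DM15}), the argument is essentially a careful verification; the delicate part is the bookkeeping that matches each of the four combinations of signs at the endpoints to the correct $S^{ij}$.
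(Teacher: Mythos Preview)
The paper does not actually prove this lemma; it merely states it and refers the reader to formulas (2.24)--(2.30) in \cite{DM15}, where the Lyapunov--Schmidt reduction is carried out. Your plan is precisely that standard reduction and is correct in outline, so there is nothing to compare beyond noting that you are reproducing the argument the authors chose to cite rather than repeat. One small point of bookkeeping: on $[0,\pi]$ the basis for $Per^{+}$ (resp.\ $Per^{-}$) consists of $e^{ikx}$ with $k$ even (resp.\ odd), not all $k\in\mathbb{Z}$; the formulas (\ref{S_11})--(\ref{S_21}) are written with unrestricted $j_i$ only because $V(m)=0$ for odd $m$ (the potential is $\pi$-periodic), so the parity constraint is automatically enforced by the numerators.
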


Moreover, (\ref{S_ij})--(\ref{S_21}) imply the following (see
Lemma~23 in \cite{DM15}).
\begin{Lemma} For any (complex-valued) potential $v$
\begin{equation}
\label{i} S^{11}(n,z)=S^{22}(n,z).
\end{equation}
 Moreover, if $V(-m)=\overline{V(m)}\;\; \forall m,$ then
\begin{equation}\label{ii}
 S^{12}(n,z)=\overline{S^{21}(n,\bar{z})},
\end{equation}
and if $V(-m)=V(m) \;\; \forall m,$ then
\begin{equation} \label{iii}
S^{12}(n,z)=S^{21}(n,z).
\end{equation}
\end{Lemma}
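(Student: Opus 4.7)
The series in \eqref{S_ij} converge absolutely and uniformly for $|z|\le 1$ by Lemma~\ref{lem1}, so it suffices to establish each identity term by term in $k$. The $k=0$ cases follow directly from \eqref{S_0} together with the relevant hypothesis on $V$: $S_0^{11}=S_0^{22}=0$ trivially, $\overline{V(2n)}=V(-2n)$ yields the $k=0$ instance of (ii), and $V(-2n)=V(2n)$ yields that of (iii). For $k\ge 1$, my strategy rests on two changes of summation variable on the multi-indices $(j_1,\ldots,j_k)$: a global \emph{negation} $j_i\mapsto -j_i$ and an \emph{order reversal} $j_i\mapsto j_{k+1-i}$. Both preserve the denominator product $(n^2-j_1^2+z)\cdots(n^2-j_k^2+z)$ (the first because $j_i^2$ is unchanged by a sign flip, the second because the product is symmetric in the indices) and both preserve the admissibility constraints $j_i\ne\pm n$, so only the transformation of the numerator string of $V$-values has to be tracked.

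For (i), I apply negation to $S_k^{11}$ first: its numerator becomes $V(-n-j_1)V(j_1-j_2)\cdots V(j_{k-1}-j_k)V(n+j_k)$. Applying order reversal $j_i\mapsto j_{k+1-i}$ then reshuffles this into $V(n+j_1)V(j_2-j_1)\cdots V(j_k-j_{k-1})V(-n-j_k)$, which is exactly the numerator of $S_k^{22}$; no hypothesis on $V$ is needed.

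For (ii), I start from $S_k^{21}(n,\bar z)$ and take complex conjugates. The denominators return to the form $n^2-j_i^2+z$, while each factor $V(m)$ becomes $\overline{V(m)}=V(-m)$ by the hypothesis. A single order reversal then converts the resulting numerator into that of $S_k^{12}(n,z)$. For (iii), the evenness assumption $V(-m)=V(m)$ makes every $V$-factor invariant under negation of its argument; in particular $V(j_{i+1}-j_i)=V(j_i-j_{i+1})$, so only the order reversal is needed to transform the numerator of $S_k^{12}$ into that of $S_k^{21}$.

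The proof is pure bookkeeping on multi-index summands, and I expect no serious obstacle. The only point requiring care is the simultaneous verification that the two substitutions preserve (a) the denominator product, (b) the set of admissible indices, and (c) the correct pattern of the middle factors $V(j_{i+1}-j_i)$; once these invariances are in place, the three identities read off directly.
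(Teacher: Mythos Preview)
Your proof is correct and is essentially the paper's own argument, only written out in more detail: the paper establishes (i) by the single change of indices $i_s=-j_{k+1-s}$ (which is precisely your negation followed by order reversal composed into one map) and obtains (ii), (iii) from the reversal $i_s=j_{k+1-s}$ together with the respective hypothesis on $V$, exactly as you do. Your explicit checks that the substitutions preserve the denominator product and the constraint $j_i\neq\pm n$ make the verification cleaner but add nothing new.
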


\begin{proof}
For each $k\in \mathbb{N}, $ the change of summation indices $i_s =-
j_{k+1-s}, $ $ s=1, \ldots, k$ proves that $S^{11}_k (n,z) = S^{22}_k
(n,z). $  In view of (\ref{S_ij}) and (\ref{S_0}), (\ref{i}) follows.

In a similar way,  we obtain that (\ref{ii}) and (\ref{iii}) hold by
using for each $k\in \mathbb{N} $ the change of indices $i_s=
j_{k+1-s},$ $ s=1,2,\ldots,k.$
\end{proof}

Further we consider only the Mathieu potential, i.e.,
\begin{equation}
\label{mp} v(x)= 2a \cos 2x = a e^{-2ix} + a e^{2ix}, \quad V(\pm
2)=a, \;\; V(k)=0 \;\;\text{if} \; k \neq \pm 2.
\end{equation}
For convenience, we set
\begin{equation}
\label{ab}
\alpha_n(z):=S^{11}(n,z)=S^{22}(n,z),\quad \beta_n(z):=S^{21}(n,z)=S^{12}(n,z).
\end{equation}
In these notations the basic equation  \eqref{determinant S-z}
becomes
\begin{equation}
\label{be}
(z-\alpha_n(z))^2=(\beta_n(z))^2.
\end{equation}
By Lemmas \ref{loc} and \ref{lem1},  for large enough $n\in
\mathbb{N},$ this equation has in the unit disc exactly the following
two roots (counted with multiplicity):
\begin{equation}
\label{zn}  z_n^- = \lambda_n^- -n^2, \quad z_n^+ = \lambda_n^+ -n^2.
\end{equation}

\section{Asymptotic Estimates for $z_n^\pm$ and $\alpha_n (z).$}

In this section we use the basic equation (\ref{be})  to derive
asymptotic estimates for the deviations $z_n^\pm.$ It turns out that
$|\beta_n (z)|, \; |z|\leq 1, $ is much smaller than $|\alpha_n (z)|,
$  so it is enough to analyze the asymptotics of $\alpha_n (z_n^\pm)
$ in order to find asymptotic estimates for $z_n^\pm.$

The following inequality is well known (e.g., see Lemma~78 in
\cite{DM15}):
\begin{equation}
\label{In1} \sum_{j \neq \pm n} \frac{1}{|n^2-j^2|} < \frac{2\log
6n}{n}, \quad \text{for} \; n \in \mathbb{N}.
\end{equation}

\begin{Lemma}
\label{lem2}
 If $|z|\leq 1$, then
\begin{equation}\label{In2}
 \sum_{j_1,...,j_\nu\neq\pm n} \frac{1}{|n^2-j_1^2+z|\cdots |n^2-j_\nu^2+z|} <
  \left( \frac{4\log 6n}{n} \right)^\nu.
\end{equation}
\end{Lemma}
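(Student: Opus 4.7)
The plan is to exploit independence of the summation variables to reduce the $\nu$-fold sum to a product of single sums, and then invoke the one-dimensional bound \eqref{In1}.

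First I would observe that since each index $j_i$ appears only in the $i$-th factor $\frac{1}{|n^2-j_i^2+z|}$, the multiple sum factorizes as
\[
\sum_{j_1,\ldots,j_\nu \neq \pm n} \prod_{i=1}^{\nu} \frac{1}{|n^2-j_i^2+z|} \;=\; \left( \sum_{j\neq \pm n} \frac{1}{|n^2-j^2+z|} \right)^{\nu}.
\]
So it suffices to prove the single-variable bound
\[
\sum_{j \neq \pm n} \frac{1}{|n^2-j^2+z|} < \frac{4\log 6n}{n}
\]
and then raise to the $\nu$-th power.

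Next I would compare the perturbed denominator with the unperturbed one via the reverse triangle inequality. For $j\in\mathbb{Z}$ with $j\neq \pm n$, the minimum of $|n^2-j^2|$ is $2n-1$ (attained at $j=\pm(n-1)$ and $j=\pm(n+1)$), so for $n\geq 2$ and $|z|\leq 1$ one has
\[
|n^2-j^2+z| \;\geq\; |n^2-j^2| - |z| \;\geq\; |n^2-j^2| - 1 \;\geq\; \tfrac{1}{2}|n^2-j^2|.
\]
Hence $\frac{1}{|n^2-j^2+z|} \leq \frac{2}{|n^2-j^2|}$, and summing over $j\neq \pm n$ while invoking \eqref{In1} gives the single-variable bound with constant $4$ in place of $2$. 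Taking the $\nu$-th power finishes the proof.

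There is no real obstacle here: the argument is a routine factorization combined with a factor-of-two bound for the shifted denominators. The only point that needs to be verified is that the perturbation $|z|\leq 1$ is negligible compared with the separation $2n-1$ of $n^2$ from the nearest $j^2$ with $j\neq \pm n$, which is precisely what legitimizes the $\tfrac{1}{2}$ at the last step.
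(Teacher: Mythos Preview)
Your proof is correct and matches the paper's argument essentially line for line: the paper also uses $|n^2-j^2+z|\geq |n^2-j^2|-1\geq \tfrac{1}{2}|n^2-j^2|$ to pick up a factor $2^\nu$, factorizes the sum, and then applies \eqref{In1}. The only cosmetic difference is that you factorize first and then bound each single sum, whereas the paper bounds termwise first and then factorizes; the two orderings are equivalent.
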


\begin{proof} If $|z|\leq 1$ and $j\neq \pm n$, then
\begin{equation*}
|n^2-j^2+z| \geq |n^2-j^2|-1 \geq \frac{1}{2} |n^2-j^2|.
\end{equation*}
Therefore,
\begin{align*}
\sum_{j_1,...,j_\nu\neq\pm n} \frac{1}{|n^2-j_1^2+z|\cdots
|n^2-j_\nu^2+z|} \leq 2^\nu \left(\sum_{j \neq \pm n}
\frac{1}{|n^2-j^2|} \right)^\nu,
\end{align*}
so (\ref{In2}) follows from (\ref{In1}).
\end{proof}

The next lemma  gives a rough estimate for $\beta_n(z);$ we improve
this estimate in the next section.
\begin{Lemma}
\label{lem3} For $|z|\leq 1  $ we have
\begin{equation}
\label{BE1} \beta_n(z)=O\left( \left( \frac{\log n}{n} \right)^n
\right).
\end{equation}
\end{Lemma}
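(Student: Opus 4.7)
The plan is to exploit the very sparse structure of the Mathieu potential. Since $V(m)=a$ for $m=\pm 2$ and $V(m)=0$ otherwise, a summand in (\ref{S_12}) is nonzero only if each of $-n+j_1,\;j_2-j_1,\ldots,j_k-j_{k-1},\;-n-j_k$ lies in $\{\pm 2\}$. Setting $j_0:=n$ and $j_{k+1}:=-n$, this condition says that $(j_0,j_1,\ldots,j_{k+1})$ is a walk in $\mathbb{Z}$ from $n$ to $-n$ with steps of size $2$. The total displacement is $2n$, so any such walk needs at least $n$ steps; hence $S_k^{12}(n,z)=0$ for $k<n-1$ and
\[
\beta_n(z)=\sum_{k\ge n-1}S_k^{12}(n,z)\qquad(\text{with }k-n\text{ odd}).
\]

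Next I bound each surviving term. All $j_s$ share the parity of $n$, so I write $j_s=n-2l_s$ with $l_s\in\mathbb{Z}\setminus\{0,n\}$. A short case analysis (splitting on $1\le l_s\le n-1$ versus $l_s\le -1$ or $l_s\ge n+1$) gives $|l_s|\,|n-l_s|\ge n-1$, hence for $|z|\le 1$ and $n\ge 2$,
\[
|n^2-j_s^2+z|=|4l_s(n-l_s)+z|\ge 4(n-1)-1\ge 3(n-1).
\]
The number of walks of length $k+1$ from $n$ to $-n$ with steps $\pm 2$ equals $\binom{k+1}{(k+1-n)/2}\le 2^{k+1}$, and every nonzero $V$-factor has modulus $|a|$. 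Combining,
\[
|S_k^{12}(n,z)|\le |a|^{k+1}\cdot 2^{k+1}\cdot (3(n-1))^{-k}=2|a|\bigl(2|a|/(3(n-1))\bigr)^k.
\]

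Summing the resulting geometric series from $k=n-1$ gives, for large $n$,
\[
|\beta_n(z)|\le 4|a|\bigl(2|a|/(3(n-1))\bigr)^{n-1}.
\]
Since $(\log n)^n$ grows faster than any exponential $C^n$, the right-hand side is $o\bigl((\log n/n)^n\bigr)$, so in particular $\beta_n(z)=O\bigl((\log n/n)^n\bigr)$ as required.

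The main obstacle is the temptation to overbound via Lemma \ref{lem2} applied term by term: that approach yields only $|S_k^{12}|\le |a|^{k+1}(4\log 6n/n)^k$, giving $|\beta_n|\le C(\log n/n)^{n-1}$ and overshooting the claim by a factor $n/\log n$. Exploiting the walk structure—only $2^{k+1}$ walks of the required length contribute, and every denominator factor is at least $3(n-1)$ because $j_s\ne\pm n$—is what yields the sharper bound needed here.
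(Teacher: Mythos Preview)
Your proof is correct and actually yields the stronger bound $\beta_n(z)=O\bigl(C^n/n^{n-1}\bigr)$ for some $C=C(a)$, which is $o\bigl((\log n/n)^n\bigr)$ as you note. The paper takes the route you warn against in your final paragraph: it observes that $S_k^{21}=0$ for $k<n-1$ and then applies Lemma~\ref{lem2} directly, obtaining $|\beta_n(z)|\le\sum_{k\ge n-1}|a|^{k+1}(4\log 6n/n)^k$. Your criticism of that approach is essentially right---read literally, the resulting bound is of order $(C\log n/n)^{n-1}$ with $C>1$, and the ratio of this to $(\log n/n)^n$ grows like $C^{n-1}\,n/\log n$, so one does not get $O((\log n/n)^n)$ in the strict sense. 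For the paper's purposes this is harmless, since Lemma~\ref{lem3} is only used (via \eqref{In8}) to show that $z_n^\pm-\alpha_n(z_n^\pm)$ decays faster than any power of $1/n$, and the looser bound already delivers that. What your argument buys is an honest proof of the stated estimate: by counting only the at most $2^{k+1}$ contributing walks and using the uniform lower bound $|n^2-j_s^2+z|\ge 3(n-1)$ (valid because $j_s\neq\pm n$ and $j_s\equiv n\pmod 2$), you avoid the logarithmic factor coming from the full sum over all $j$ in \eqref{In2}.
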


\begin{proof}
If $\nu < n-1, $ then all terms of the sum $S^{21}_\nu (n,z) $ in
(\ref{S_21}) vanish. Indeed, each term of the sum $S^{21}_\nu (n,z) $
is a fraction which nominator has the form $V(x_1) V(x_2) \cdots
V(x_{\nu+1}) $ with $ x_1= n+j_1, \; x_2 = j_2-j_1,  \ldots,  x_{\nu
+1}= n- j_{\nu}. $ Therefore, if $\nu < n-1$ then there are no $x_1,
x_2,\ldots, x_{\nu+1} \in \{-2,2\}$ satisfying
$x_1+x_2+\cdots+x_{\nu+1}=2n,$ so every term of the sum $S^{21}_\nu
(n,z) $ vanishes due to (\ref{mp}). Hence, by (\ref{mp}) we have
\begin{equation*}
|\beta_n(z)| \leq \sum_{\nu=n-1}^\infty \sum_{j_1,...,j_\nu \neq\pm
n} \frac{|a|^{\nu+1}}{|n^2-j_1^2+z|\cdots |n^2-j_\nu^2+z|},
\end{equation*}
so (\ref{BE1}) follows from (\ref{In2}).
\end{proof}

\begin{Lemma}
\label{lemA} In the above notations,
\begin{equation}
\label{In0} z_n^\pm = \frac{a^2}{2n^2} + O\left( \frac{1}{n^4}
\right), \quad  \alpha_n (z_n^\pm)=\frac{a^2}{2n^2} + O\left(
\frac{1}{n^4} \right), \quad  n \to \infty.
\end{equation}
\end{Lemma}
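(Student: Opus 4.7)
The plan is to use the basic equation \eqref{be} to reduce the problem to an accurate evaluation of $\alpha_n(z_n^\pm)$, achieved by isolating the main term $S^{11}_1(n,z)$ and controlling the remainder $\sum_{k\ge 3} S^{11}_k(n,z)$. From \eqref{be} one has $z_n^\pm - \alpha_n(z_n^\pm) = \pm\beta_n(z_n^\pm)$, so Lemma~\ref{lem3} gives $z_n^\pm = \alpha_n(z_n^\pm) + O\bigl((\log n /n)^n\bigr)$; since this error is super-polynomially small, both asymptotic identities will follow once I show $\alpha_n(z_n^\pm) = a^2/(2n^2) + O(1/n^4)$.

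For the Mathieu potential \eqref{mp}, a term in \eqref{S_11} is nonzero only if the sequence of $V$-arguments $(-n+j_1,\,j_2-j_1,\,\ldots,\,n-j_k)$ takes values in $\{\pm 2\}$, and the partial sums $s_i = x_1+\cdots+x_i$ satisfy $s_i\ne 0$ for $1\le i\le k$ (equivalently, $j_i = n+s_i\ne\pm n$); in particular $S^{11}_k\equiv 0$ for $k$ even. For $k=1$ only $j_1 = n\pm 2$ contribute, giving
\[
 S^{11}_1(n,z) = \frac{a^2}{-4n-4+z} + \frac{a^2}{4n-4+z} = \frac{a^2(8-2z)}{(4n+4-z)(4n-4+z)}.
\]
Hence $S^{11}_1(n,z) = O(1/n^2)$ for $|z|\le 1$, while for $|z|=O(1/n^2)$ a direct expansion yields $S^{11}_1(n,z) = a^2/(2n^2) + O(1/n^4)$.

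For the higher-order terms I would argue as follows. When $k=3$, exactly two sign sequences are admissible, namely $(+,+,-,-)$ and $(-,-,+,+)$; writing down the corresponding fractions and combining them over a common denominator, the leading $O(1/n^3)$ contributions cancel by the symmetry $s_i\leftrightarrow -s_i$, yielding $S^{11}_3(n,z) = O(1/n^4)$ uniformly on $|z|\le 1$. For odd $k\ge 5$, the inequality $|n^2-j_i^2+z|\ge n|s_i|/2$ (valid for $|s_i|\le k+1\ll n$ and $|z|\le 1$), combined with the Catalan-type count (at most $2^k$) of admissible walks and the bound $|s_i|\ge 2$, gives $|S^{11}_k(n,z)|\le C|a|(2|a|/n)^k$; summing over odd $k\ge 5$ yields $\sum_{k\ge 5}|S^{11}_k(n,z)| = O(1/n^5)$.

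Combining everything, $\alpha_n(z) = S^{11}_1(n,z) + O(1/n^4)$ uniformly on $|z|\le 1$. Applied first with the crude estimate $S^{11}_1 = O(1/n^2)$ and the super-polynomial smallness of $\beta_n$, this gives $z_n^\pm = O(1/n^2)$; inserting this improved bound into the expansion of $S^{11}_1$ then yields $\alpha_n(z_n^\pm) = a^2/(2n^2) + O(1/n^4)$, and one further application of the basic equation delivers the claim. The delicate step is the $k=3$ case: the absolute-value bound alone is only $O(1/n^3)$, so the cancellation between the two admissible walks must be made explicit, while the tail for $k\ge 5$ and the expansion of $S^{11}_1$ are routine calculations.
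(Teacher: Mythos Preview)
Your proposal is correct and follows essentially the same two-step bootstrap as the paper: first use the crude bound $\alpha_n=O(1/n^2)$ together with Lemma~\ref{lem3} to obtain $z_n^\pm=O(1/n^2)$, then feed this back into the explicit formula for $S^{11}_1$ and the estimate $S^{11}_3=O(1/n^4)$ to get the stated asymptotics. The only differences are cosmetic: for the tail $k\ge 5$ you count admissible walks and bound each denominator by $cn$ directly, whereas the paper simply invokes the bound of Lemma~\ref{lem2}; and for $k=3$ you make explicit the cancellation between the two admissible walks that the paper hides behind the phrase ``it is easy to see.''
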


\begin{proof}
In view of  (\ref{S_ij}), (\ref{S_11}) and (\ref{ab}), we have
\begin{equation}
\label{A} \alpha_n(z)= \sum_{p=1}^\infty A_p(n,z),
\end{equation}
where
\begin{equation}\label{A_p}
A_p(n,z)= \sum_{j_1,...,j_p\neq\pm n} \frac{V(-n+j_1)V(j_2-j_1)\cdots
V(j_p - j_{p-1})V(n-j_p)}{(n^2-j_1^2+z)\cdots(n^2-j_p^2+z)}.
\end{equation}

First we show that
\begin{equation}
\label{A2k} A_{2k}(n,z) \equiv 0 \quad \forall k \in \mathbb{N}.
\end{equation}
Indeed, for $p=2k$ each term of the sum in (\ref{A_p}) is a fraction
which nominator has the form $V(x_1) V(x_2) \cdots V(x_{2k+1}) $ with
\begin{equation*}
x_1= -n+j_1, \quad x_2= j_2-j_1, \quad \ldots, \quad x_{2k+1}= n-
j_{2k}.
\end{equation*}
Since  $x_1+x_2+\cdots+x_{2k+1}=0, $ it follows that there is $i_0$
such that $x_{i_0} \neq \pm 2, $ so $V(x_{i_0}) =0 $ due to
(\ref{mp}). Therefore, every term of the sum $A_{2k} (n,z) $
vanishes, hence (\ref{A2k}) holds.\\

Next we estimate iteratively, in two steps,   $\alpha_n (z)$ and
$z_n^\pm.$ The first step provides rough estimates which we improve
in the second step.\\

{\em Step 1.} By (\ref{A_p}), we have
\begin{equation*}
A_1(n,z)= \sum_{j_1\neq \pm n}\frac{V(-n+j_1)V(n-j_1)}{n^2-j_1^2+z}.
\end{equation*}
 In view of (\ref{mp}), we get a non-zero term in the above sum if and only if
$j_1=n+2,$ or $ j_1=n-2.$ Therefore,
\begin{align}
\label{A_1} A_1 (n,z) = \frac{a^2}{n^2-(n-2)^2+z} +
\frac{a^2}{n^2-(n+2)^2+z} = a^2 \frac{8-2z}{(4n)^2-(4-z)^2},
\end{align}
which implies that
\begin{equation}
\label{In4} A_1(n,z)= O\left(\frac{1}{n^2}\right)  \quad \text{for}
\;\; |z|\leq 1.
\end{equation}

On the other hand, from (\ref{mp}), \eqref{In2} and (\ref{A_p}) it
follows that
\begin{equation}
\label{In5} |A_{2k-1}(n,z)| \leq |a|^{2k}\left( \frac{ 4\log 6n }{n}
\right)^{2k-1}, \quad k=2,3, \ldots,
\end{equation}
which implies
\begin{equation}
\label{In6} \sum_{k=2}^\infty |A_{2k-1}(n,z)| \leq \sum_{k=2}^\infty
|a|^{2k} \left( \frac{4\log 6n}{n} \right)^{2k-1}= o \left(1/n^2
\right).
\end{equation}
Hence, by (\ref{In4}) and (\ref{In6}) we obtain
\begin{equation}
\label{In7} \alpha_n(z)=O\left( \frac{1}{n^2} \right) \quad
\text{for} \;\; |z|\leq 1.
\end{equation}

Furthermore, from (\ref{be}), (\ref{zn}) and (\ref{BE1}) it follows
immediately that
\begin{equation}
\label{In8} z_n^\pm -  \alpha_n(z_n^\pm)=O\left( \frac{1}{n^k}
\right), \quad  \forall k \in \mathbb{N}.
\end{equation}
Therefore, (\ref{In7}) implies that
\begin{equation}
\label{In11} z_n^\pm= O\left(\frac{1}{n^2} \right).
\end{equation}

\emph{Step 2.}    By \eqref{A_1} we have
\begin{equation}\label{In10}
A_1 (n,z)= \frac{a^2}{2n^2}+ O\left(\frac{1}{n^4}\right) \quad
\text{if}  \;\; z=O(1/n^2).
\end{equation}
Let us consider
\begin{equation*}
A_3(n,z)= \sum_{j_1,j_2,j_3\neq \pm n} \frac{V(-n+j_1)V(j_2-j_1)
V(j_3 -j_2) V(n-j_3)}{(n^2-j_1^2+z)(n^2-j_2^2+z)(n^2-j_3^2+z)}.
\end{equation*}
In view of (\ref{mp}), we get a non-zero term in the above sum if and
only if
\begin{equation*}
j_1=n+2; \quad j_2=n+4; \quad j_3=n+2,
\end{equation*}
or
\begin{equation*}
j_1=n-2; \quad j_2=n-4; \quad j_3=n-2.
\end{equation*}
Hence,
\begin{align*}
  A_3(n,z) &=
\frac{a^4}{[n^2-(n+2)^2+z][n^2-(n+4)^2+z][n^2-(n+2)^2+z]}
\\  &+ \frac{a^4}{[n^2-(n-2)^2+z][n^2-(n-4)^2+z][n^2-(n-2)^2+z]},
\end{align*}
so it is easy to see that
\begin{align}\label{In12}
A_3(n,z)=  O\left( \frac{1}{n^4}\right) \quad \text{if} \;\;|z|\leq
1.
\end{align}
On the other hand, by \eqref{In5} we have
\begin{equation}
\label{In13} \sum_{k=3}^\infty |A_{2k-1}(n,z)| \leq \sum_{k=3}^\infty
|a|^{2k} \left( \frac{4\log 6n}{n} \right)^{2k-1}= o \left(1/n^4
\right).
\end{equation}
Therefore, by (\ref{In10}), (\ref{In12}) and (\ref{In13}) imply that
\begin{equation}
\label{In14} \alpha_n(z)=\frac{a^2}{2n^2}+
O\left(\frac{1}{n^4}\right) \quad \text{if} \;\; z=O(1/n^2).
\end{equation}

Hence, from \eqref{In8} it follows that
\begin{equation}
\label{In15}
z_n^\pm = \frac{a^2}{2n^2} + O \left( \frac{1}{n^4} \right).
\end{equation}
\end{proof}

\emph{Remark.}  From \eqref{A_1} and \eqref{In15} it follows that
\begin{equation}
\label{In16} A_1(n,z_n^\pm)= \frac{a^2}{2n^2} + \frac{a^2}{2n^4} -
\frac{a^4}{16n^4} + O\left( \frac{1}{n^6} \right).
\end{equation}
Similarly, it is easily seen that
\begin{equation}
\label{In17} A_3 (n,z_n^\pm)= \frac{a^4}{16n^4} + O\left(
\frac{1}{n^6} \right).
\end{equation}
On the other hand, analyzing $A_5 (n,z) $ one can show that
\begin{equation}
\label{In18} A_5 (n,z)=  O\left( \frac{1}{n^6} \right) \quad
\text{if} \;\; |z|\leq 1.
\end{equation}
Moreover, by (\ref{In5}) we have
\begin{equation}
\label{In19} \sum_{k=4}^\infty |A_{2k-1}(n,z)| = o \left(
\frac{1}{n^6} \right) \quad \text{if} \;\; |z|\leq 1.
\end{equation}
Hence, in view of \eqref{In8}, the estimates
\eqref{In16}--\eqref{In19} lead to
\begin{equation}
\label{BestEstimateZ_n}
z_n^\pm= \frac{a^2}{2n^2} + \frac{a^2}{2n^4} + O\left( \frac{1}{n^6} \right).
\end{equation}
This analysis could be extended in order to obtain more asymptotic
terms of $z_n^\pm, $ and even to explain that the corresponding
asymptotic series along the powers of $1/n$ contains only even
nontrivial terms. However,
in this paper we need only the estimate (\ref{In15}).\\

The following assertion plays an essential role later.
\begin{Lemma} With $\gamma_n = \lambda_n^+ - \lambda_n^- = z_n^+ -
z_n^-,$
\begin{equation}
\label{D1} \alpha_n(z_n^+)-\alpha_n(z_n^-)= \gamma_n \left[
-\frac{a^2}{8n^2} + O \left( \frac{1}{n^4}\right) \right].
\end{equation}
\end{Lemma}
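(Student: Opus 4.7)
The plan is to exploit the near-smoothness of $\alpha_n$ and write, by the fundamental theorem of calculus applied termwise,
$$
\alpha_n(z_n^+)-\alpha_n(z_n^-)=\gamma_n\int_0^1 \alpha_n'(z_n^-+t\gamma_n)\,dt,
$$
so that the claim reduces to establishing
$\alpha_n'(\zeta) = -\frac{a^2}{8n^2} + O\!\left(\frac{1}{n^4}\right)$
uniformly for $|\zeta|\leq C/n^2$. Since both $z_n^\pm = O(1/n^2)$ by Lemma~\ref{lemA}, the integration segment lies entirely in this range. Differentiating the expansion $\alpha_n(z) = \sum_{k\geq 1}A_{2k-1}(n,z)$ termwise (using that $A_{2k}\equiv 0$), I would split the analysis into three pieces corresponding to $k=1$, $k=2$, and $k\geq 3$.

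For $k=1$, the closed form $A_1(n,z)=a^2(8-2z)/[16n^2-(4-z)^2]$ from \eqref{A_1} allows a direct computation of $A_1'(n,\zeta)$. A short Taylor expansion at $\zeta=0$, keeping track of the subleading corrections in $1/n$, yields $A_1'(n,\zeta) = -a^2/(8n^2) + O(1/n^4)$ uniformly on $|\zeta|\leq C/n^2$; this is the source of the main term $-a^2/(8n^2)$ in \eqref{D1}. For $k=2$, the sum $A_3(n,z)$ reduces to only two explicit summands (exhibited in Step~2 of the proof of Lemma~\ref{lemA}), each with three denominators of orders $n,n,n$. Termwise differentiation merely squares one factor, and a direct inspection shows every resulting fraction has denominator of order $n^4$, so $A_3'(n,\zeta) = O(1/n^4)$.

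The main obstacle is the tail $\sum_{k\geq 3}A_{2k-1}'(n,\zeta)$, where the naive application of Lemma~\ref{lem2} to the differentiated series would introduce spurious $\log$ factors. The resolution is to exploit the minimal support of $V$, namely $V(m)\neq 0$ only for $m=\pm 2$: as in the proof of Lemma~\ref{lem3}, non-vanishing summands in $A_{2k-1}$ correspond to walks of length $2k$ on $\mathbb{Z}$ with steps $\pm 2$ starting and ending at $n$ and avoiding $\pm n$. For $k\leq n/2$ every intermediate index $j_i$ lies within $2(2k-1)$ of $n$, so $|n^2-j_i^2+\zeta|\geq c n$ uniformly for $|\zeta|\leq 1$, while a Catalan-type count bounds the number of such walks by $2\cdot 4^{k-1}$. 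Combined with the factor $(2k-1)$ from the product rule, this gives $|A_{2k-1}'(n,\zeta)|\leq 2k\,(C|a|/n)^{2k}$, and summing the resulting geometric series over $k\geq 3$ yields $O(1/n^6)$, which is absorbed into the $O(1/n^4)$ remainder.

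Collecting the three contributions gives $\alpha_n'(\zeta) = -a^2/(8n^2)+O(1/n^4)$ uniformly on $|\zeta|\leq C/n^2$, and substitution into the integral representation above yields exactly \eqref{D1}. The only delicate point is the tail estimate, where the restricted support of the Mathieu potential must be used rather than the generic bound \eqref{In2}.
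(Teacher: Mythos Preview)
Your argument is correct and complete in spirit, but it takes a genuinely different route from the paper's proof. The paper isolates $A_1$ and computes the difference $A_1(n,z_n^+)-A_1(n,z_n^-)$ \emph{algebraically} from the explicit rational expression \eqref{A_1}, obtaining $\gamma_n[-a^2/(8n^2)+O(1/n^4)]$ without ever differentiating. For the remainder $\tilde\alpha_n=\alpha_n-A_1=A_3+A_5+\cdots$, the paper simply recalls from \eqref{In12}--\eqref{In13} that $\tilde\alpha_n(z)=O(1/n^4)$ on the full disc $|z|\le 1$ and then invokes the Cauchy estimate for derivatives to conclude $\tilde\alpha_n'(z)=O(1/n^4)$ on $|z|\le 1/2$. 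Integrating over $[z_n^-,z_n^+]$ then gives the $\gamma_n\,O(1/n^4)$ contribution. No termwise differentiation, no walk counting.

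Your approach, by contrast, differentiates each $A_{2k-1}$ explicitly and controls the tail via the combinatorics of admissible walks together with the pointwise lower bound $|n^2-j^2+\zeta|\ge cn$. This is more hands-on but also more informative: it makes transparent that the tail contributes $O(1/n^6)$ rather than merely $O(1/n^4)$. One small point: your restriction to $k\le n/2$ is in fact unnecessary, since $|n^2-j^2|\ge 2n-1$ holds for \emph{every} integer $j\neq\pm n$; once you note this, the geometric bound $|A_{2k-1}'(n,\zeta)|\le (2k-1)|a|^{2k}\binom{2k}{k}/(cn)^{2k}$ holds for all $k$ and the range $k>n/2$ needs no separate treatment. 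The paper's Cauchy-formula device is the cleaner shortcut---it converts the already-proved sup bound on $\tilde\alpha_n$ into a derivative bound for free---while your argument is more elementary and self-contained.
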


\begin{proof} By \eqref{A} and \eqref{A2k} we obtain
\begin{align}\label{D2}
\alpha_n(z_n^+)-\alpha_n(z_n^-)= A_1(n,z_n^+)- A_1(n,z_n^-) +
\int_{z_n^-}^{z_n^+}  \frac{d}{dz} \tilde{\alpha}_n(z) \, dz,
 \end{align}
where
\begin{align*}
\tilde{\alpha}_n(z)= \alpha_n(z) -A_1(n,z) =A_3(n,z) + A_5(n,z) +
\cdots.
\end{align*}
In view of \eqref{In12} and  \eqref{In13},
\begin{equation*}
\tilde{\alpha}_n(z) = O(1/n^4) \quad \text{for} \;\; |z|\leq 1.
\end{equation*}
By the Cauchy formula for derivatives, this estimate implies that
$$
d\alpha_n/dz =O(1/n^4) \quad \text{for} \;\; |z|\leq 1/2.
$$
Hence, we obtain
\begin{align} \label{D3}
\int_{z_n^-}^{z_n^+}  \frac{d}{dz} \tilde{\alpha}_n(z) \, dz =
\gamma_n O \left( \frac{1}{n^4} \right).
\end{align}
On the other hand, by \eqref{A_1}
\begin{align} \nonumber
A_1(n,z_n^+)-A_1(n,z_n^-) &= \left[ \frac{8-2z_n^+}{(4n)^2-
(4-z_n^+)^2} - \frac{8-2z_n^-}{(4n)^2- (4-z_n^-)^2}  \right] a^2 \\
\nonumber &=\gamma_n \left[\frac{-32n^2  -32 + 8(z_n^+ + z_n^-) -2
z_n^+ z_n^-} {[(4n)^2- (4-z_n^+)^2][(4n)^2- (4-z_n^-)^2]}  \right]
a^2.
\end{align}
Therefore, taking into account \eqref{In0}, we obtain
\begin{equation}
\label{D4} A_1(n,z_n^+)-A_1(n,z_n^-)= \gamma_n \left[
\frac{-a^2}{8n^2} + O \left( \frac{1}{n^4} \right) \right].
\end{equation}
In view of  \eqref{D2}, the estimates \eqref{D3} and \eqref{D4} lead
to \eqref{D1}.
\end{proof}

\section{Asymptotic formulas for $\beta_n(z_n^\pm)$ and $\gamma_n.$}

In this section we find more precise asymptotics of $\beta_n
(z_n^\pm)$. These asymptotics, combined with the results of the
previous section,
lead to an asymptotics for $\gamma_n.$  \\

In view of (\ref{mp}), each nonzero term in \eqref{S_21} corresponds
to a \emph{k}-tuple of indices $(j_1,...,j_k)$ with $j_1, \ldots ,
j_k \neq \pm n$ such that
\begin{equation}
\label{a1} (n+j_1)+(j_2-j_1)+ \cdots +(j_k-j_{k-1})+(n-j_k)=2n
\end{equation}
and
\begin{equation}
\label{a2} n+j_1,  j_2-j_1, \ldots, j_k-j_{k-1},   n-j_k \in \{-2,
2\}.
\end{equation}
By (\ref{a1}) and (\ref{a2}), there is one-to-one correspondence
between the nonzero terms in \eqref{S_21} and the "admissible" walks
$x=(x(t))_{t=1}^{k+1}$ on $\mathbb{Z}$ from $-n$ to $n$ with steps
$x(t)= \pm 2$ and vertices $j_0= -n, \; j_{k+1}=n, $
\begin{equation}
\label{j_k}  j_s=-n+\sum_{t=1}^s x(t) \neq \pm n, \quad s=1,\ldots,
k.
\end{equation}

Let $X_n(p), \; p=0, 1, 2, \ldots $ denote set of all such walks with
$p$ negative steps. It is easy to see that every walk $x \in X_n (p)$
has totally $n+ 2p $  steps because $\sum x(t) = 2n. $ Therefore,
every admissible walk has  at least $n$ steps.

In view of (\ref{S_ij}), (\ref{S_21}), (\ref{mp}) and (\ref{ab}), we
have
\begin{equation}\label{sigma_p}
\beta_n(z)= \sum_{p=0}^\infty \sigma_p(n,z) \quad \text{with} \quad
\sigma_p(n,z)=\sum_{x\in X_n(p)} h(n,z),
\end{equation}
where, for $x=(x(t))_{t=1}^{k+1},$
\begin{equation}
\label{h(x,z)} h(x,z)=\frac{a^{k+1}}{(n^2-j_1^2+z)(n^2-j_2^2+z)\cdots
(n^2-j_k^2+z)}
\end{equation}
with $j_1, \ldots, j_k $ given by (\ref{j_k}).

The set $X_n (0) $ has only one element, namely the walk
\begin{equation}
\label{xi} \xi=(\xi(t))_{t=1}^n, \quad \xi(t)=2 \;\; \forall t.
\end{equation}
Therefore,
\begin{equation}
\label{xiz}  \sigma_0 (n,z)= h(\xi, z) =\frac{a^n}{(n^2-j_1^2+z)
\cdots (n^2-j_{n-1}^2+z)}
\end{equation}
with $j_k=-n+2k$, $ k=1,\cdots, n-1$. Moreover, since
\begin{align*}
\prod_{k=1}^{n-1} \left(n^2-(-n+2k)^2 \right) = 4^{n-1} [(n-1)!]^2,
\end{align*}
the following holds.
\begin{Lemma} In the above notations,
\begin{equation}\label{Sigma0}
 \sigma_0(n,0)= h(\xi,0)= \frac{4(a/4)^n}{[(n-1)!]^2}.
\end{equation}
\end{Lemma}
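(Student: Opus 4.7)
The statement reduces to a direct calculation once one unpacks the definitions. The plan is as follows.

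First I would note that, by the definition (\ref{xi}), the set $X_n(0)$ consists of the single walk $\xi = (\xi(t))_{t=1}^{n}$ with $\xi(t) = 2$ for every $t$ (the unique admissible walk with no negative steps must take $n$ positive steps of size $2$ to travel from $-n$ to $n$). Hence $\sigma_0(n,0) = h(\xi, 0)$ immediately from (\ref{sigma_p}), and we are reduced to evaluating $h(\xi,0)$.

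Next, the vertices of $\xi$ are given by (\ref{j_k}) with $x(t)=2$, so $j_s = -n + 2s$ for $s = 1, \ldots, n-1$. Therefore, by (\ref{h(x,z)}) with $k+1 = n$,
\begin{equation*}
h(\xi, 0) = \frac{a^{n}}{\prod_{s=1}^{n-1}\bigl(n^2 - (-n+2s)^2\bigr)}.
\end{equation*}
The key step is the factorization
\begin{equation*}
n^2 - (-n+2s)^2 = \bigl(n - (-n+2s)\bigr)\bigl(n + (-n+2s)\bigr) = (2n-2s)(2s) = 4s(n-s),
\end{equation*}
which splits the product into two copies of $(n-1)!$ and a power of $4$:
\begin{equation*}
\prod_{s=1}^{n-1}\bigl(n^2 - (-n+2s)^2\bigr) = 4^{n-1}\prod_{s=1}^{n-1} s \cdot \prod_{s=1}^{n-1}(n-s) = 4^{n-1}\,[(n-1)!]^2.
\end{equation*}
(This is the identity already recorded just above the statement of the lemma.) Substituting back gives
\begin{equation*}
h(\xi,0) = \frac{a^{n}}{4^{n-1}\,[(n-1)!]^2} = \frac{4\,(a/4)^n}{[(n-1)!]^2},
\end{equation*}
which is exactly (\ref{Sigma0}).

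There is no substantive obstacle; the only thing to be careful about is the bookkeeping between the number of steps $k+1$ of the walk and the number $k = n-1$ of denominator factors in $h(\xi,0)$, and the off-by-one arising because the endpoints $j_0 = -n$ and $j_{k+1} = n$ are excluded from the denominator. Once this is kept straight, the conclusion follows from the telescoping product identity above.
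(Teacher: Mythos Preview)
Your argument is correct and is exactly the computation the paper uses: the paper records the product identity $\prod_{k=1}^{n-1}\bigl(n^2-(-n+2k)^2\bigr)=4^{n-1}[(n-1)!]^2$ immediately before the lemma and then states the result, and you have simply supplied the missing line $n^2-(-n+2s)^2=4s(n-s)$ that justifies that identity. There is no difference in approach.
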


It is well known that
\begin{align}\label{HarmonicSum}
\sum_{k=1}^n \frac{1}{k}= \log n + \gamma + \frac{1}{2n} -
\frac{1}{12n^2} + O\left( \frac{1}{n^4} \right), \quad n\in \mathbb{N},
\end{align}
where $\gamma$ is the Euler's constant.

\begin{Lemma}
\label{lemb2} In the above notations,
\begin{equation}
\label{b2} \sigma_0 (n,z_n^\pm)=\sigma_0(n,0) \left[1- \frac{a^2 \log
n}{4n^3} - \frac{a^2 \gamma}{4n^3} + O\left(
\frac{1}{n^4}\right)\right].
\end{equation}
\end{Lemma}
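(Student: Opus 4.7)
The plan is to reduce \eqref{b2} to the asymptotics of a single finite product by taking logarithms. Using the identity $n^2-(-n+2k)^2 = 4k(n-k)$, the formula \eqref{xiz} for $\sigma_0(n,z)$ rewrites as
\begin{equation*}
\frac{\sigma_0(n,z)}{\sigma_0(n,0)} = \prod_{k=1}^{n-1}\frac{4k(n-k)}{4k(n-k)+z} = \prod_{k=1}^{n-1}\frac{1}{1+\frac{z}{4k(n-k)}}.
\end{equation*}
By Lemma \ref{lemA}, $z_n^\pm = O(1/n^2)$, and since $k(n-k)\ge n-1$ for $1\le k\le n-1$, each factor is $1+O(1/n^3)$. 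This lets me Taylor expand $\log(1+u)$ safely and write
\begin{equation*}
\log\frac{\sigma_0(n,z_n^\pm)}{\sigma_0(n,0)} = -\frac{z_n^\pm}{4}\sum_{k=1}^{n-1}\frac{1}{k(n-k)} + R_n,
\end{equation*}
where $R_n$ collects the quadratic and higher Taylor contributions.

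For the main sum I would use the partial fraction $\frac{1}{k(n-k)} = \frac{1}{n}\bigl(\frac{1}{k}+\frac{1}{n-k}\bigr)$ together with symmetry to obtain $\sum_{k=1}^{n-1}\frac{1}{k(n-k)} = \frac{2H_{n-1}}{n}$, and then derive $H_{n-1} = \log n + \gamma + O(1/n)$ from \eqref{HarmonicSum}. Combined with $z_n^\pm = a^2/(2n^2)+O(1/n^4)$, this yields
\begin{equation*}
-\frac{z_n^\pm}{4}\cdot\frac{2H_{n-1}}{n} = -\frac{a^2(\log n+\gamma)}{4n^3} + O\!\left(\frac{1}{n^4}\right),
\end{equation*}
where the cross-terms are controlled by $O(1/n^4)\cdot H_{n-1}/n = O(\log n/n^5) = O(1/n^4)$.

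For the remainder $R_n$, I would estimate $|R_n|\lesssim \sum_{k=1}^{n-1}(z_n^\pm/(k(n-k)))^2$ and, by splitting the sum at $k=n/2$ and using symmetry, bound $\sum_{k=1}^{n-1}\frac{1}{k^2(n-k)^2} = O(1/n^2)$. Since $(z_n^\pm)^2 = O(1/n^4)$, this gives $R_n = O(1/n^6)$, well inside the stated error. Finally, exponentiating via $e^u = 1+u+O(u^2)$, which is legitimate because $u = O(\log n/n^3)$ so $u^2 = O(\log^2 n/n^6) = O(1/n^4)$, converts the logarithmic estimate into the claimed formula \eqref{b2}.

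The main obstacle is purely bookkeeping: one has to carry exactly the right number of terms in the harmonic-sum expansion and in $z_n^\pm$ so that the final remainder is genuinely $O(1/n^4)$ rather than being contaminated by an intrusive $\log n$ factor. Everything else is a routine product-to-sum reduction combined with the elementary partial-fraction decomposition, and the dominant $1/n^3$ behavior comes entirely from the logarithmic growth of $H_{n-1}$ against the $1/n^2$ size of $z_n^\pm$.
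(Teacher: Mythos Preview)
Your proposal is correct and follows essentially the same route as the paper's own proof: write the ratio $\sigma_0(n,z_n^\pm)/\sigma_0(n,0)$ as the product $\prod_{k=1}^{n-1}(1+z_n^\pm/(4k(n-k)))^{-1}$, take logarithms, use the partial fraction $\frac{1}{k(n-k)}=\frac{1}{n}\bigl(\frac{1}{k}+\frac{1}{n-k}\bigr)$ together with \eqref{HarmonicSum} and \eqref{In0} to evaluate the linear term, bound the quadratic remainder, and exponentiate. The only cosmetic difference is that the paper records the weaker (but sufficient) bound $\sum_k |b_k|^2=O(1/n^4)$, whereas you sharpen it to $O(1/n^6)$.
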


\begin{proof} By (\ref{xiz}), we have
\begin{equation}\label{b3}
\sigma_0(n,z_n^\pm) =\sigma_0(n,0) \prod_{k=1}^{n-1}
\left(1+\frac{z_n^\pm}{n^2-(-n+2k)^2}\right)^{-1}.
\end{equation}
For simplicity, we set $\displaystyle b_k =
\frac{z_n^\pm}{n^2-(-n+2k)^2}= \frac{z_n^\pm}{4k(n-k)}.$ Then,
\begin{align*}
\log\left( \prod_{k=1}^{n-1} (1+b_k)^{-1} \right) &=
-\sum_{k=1}^{n-1} \log(1+b_k) = -\sum_{k=1}^{n-1} b_k + O \left(
\sum_{k=1}^{n-1} |b_k|^2 \right).
\end{align*}
Using \eqref{In0}, we obtain
\begin{align*}
\sum_{k=1}^{n-1} b_k &= \left( \sum_{k=1}^{n-1} \frac{1}{4k(n-k)} \right)
\left[ \frac{a^2}{2n^2} +  O\left( \frac{1}{n^4} \right) \right] \\
&= \frac{1}{2n} \left(\sum_{k=1}^{n-1} \frac{1}{k} \right) \left[
\frac{a^2}{2n^2} +  O\left( \frac{1}{n^4} \right) \right].
\end{align*}
By \eqref{HarmonicSum}, it follows that
\begin{equation*}
\sum_{k=1}^{n-1} b_k =\frac{a^2 \log n}{4n^3} +
\frac{a^2 \gamma}{4n^3} + O\left( \frac{1}{n^4}\right).
\end{equation*}
On the other hand, by \eqref{In0},
\begin{align*} \sum_{k=1}^{n-1} |b_k|^2 =
\left( \sum_{k=1}^{n-1} \frac{1}{[4k(n-k)]^2} \right) O\left(
\frac{1}{n^4}\right) = O \left( \frac{1}{n^4} \right).
\end{align*}
Hence,
\begin{align*} \log  \left (\prod_{k=1}^{n-1} (1+ b_k)^{-1} \right )  =
 - \frac{a^2 \log n}{4n^3} -\frac{a^2 \gamma}{4n^3}  + O\left(
 \frac{1}{n^4}\right),
\end{align*}
which implies (\ref{b2}).

\end{proof}

Next we study the ratio $\sigma_1(n,z)/\sigma_0(n,z).$
\begin{Lemma}
\label{lemR} We have
\begin{equation}
\label{r1} \sigma_1(n,z)=\sigma_0(n,z) \cdot \Phi (n,z),
\end{equation}
where
\begin{equation}
\label{r2}  \Phi (n,z)=\sum_{k=2}^{n-1} \varphi_k (n,z)
\end{equation}
with
\begin{equation}
\label{r3}  \varphi_k (n,z)=
\frac{a^2}{[n^2-(-n+2k)^2+z][n^2-(-n+2k-2)^2+z]}.
\end{equation}
\end{Lemma}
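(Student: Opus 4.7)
The plan is to enumerate $X_n(1)$ explicitly and then to compare each summand in $\sigma_1(n,z)$ against the single walk $\xi$ that accounts for $\sigma_0(n,z)$. Every walk $x \in X_n(1)$ has exactly $n+2$ steps, $n+1$ of them equal to $+2$ and one equal to $-2$, so it is completely determined by the index $\ell \in \{1,\ldots,n+2\}$ at which the negative step occurs. For such $x$ the intermediate vertices are $j_t = -n + 2t$ for $1 \leq t \leq \ell - 1$ and $j_t = -n + 2t - 4$ for $\ell \leq t \leq n+1$. Imposing the admissibility constraint $j_s \neq \pm n$ forces $\ell \in \{3, 4, \ldots, n\}$: indeed $j_n = n$ arises precisely when $\ell > n$, while $j_2 = -n$ arises precisely when $\ell \leq 2$.

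Having fixed such an $\ell$, I would carry out a multiset comparison of the denominators. For $\xi$ the denominator of $h(\xi, z)$ is the product of $n^2 - (-n+2k)^2 + z$ over $k = 1, \ldots, n-1$, each factor appearing once. For a walk with negative step at position $\ell$, the same collection of factors appears, except that the two indexed by $k = \ell-1$ and $k = \ell - 2$ each occur one extra time (because the vertex values $-n+2(\ell-1)$ and $-n+2(\ell-2)$ are both traversed twice—once on the ascending leg and once after the dip). Since $h(x,z)$ also carries an extra $a^2$ in the numerator relative to $h(\xi,z)$, the ratio $h(x,z)/\sigma_0(n,z)$ equals $\varphi_{\ell-1}(n,z)$ in the notation of \eqref{r3}. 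Reindexing via $k = \ell - 1$ (so that $k$ runs over $\{2,\ldots,n-1\}$) and summing over $x \in X_n(1)$ yields \eqref{r1}.

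The main delicacy is the bookkeeping of which denominator factors coincide between $h(x,z)$ and $\sigma_0(n,z)$ and which admissibility constraints eliminate the boundary positions $\ell \in \{1,2,n+1,n+2\}$. Once these two points are settled, the computation is purely algebraic and produces the factors in $\varphi_k$ exactly as prescribed; no cancellation, regrouping, or estimation is needed.
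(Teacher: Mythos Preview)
Your argument is correct and follows essentially the same route as the paper: both proofs enumerate $X_n(1)$ by the position of the single negative step, verify via the admissibility constraints $j_s\neq\pm n$ that this position ranges over an interval of length $n-2$, and then observe that the vertex multiset of such a walk differs from that of $\xi$ by exactly two repeated factors, yielding the ratio $\varphi_k(n,z)$. Your index $\ell$ (position of the $-2$ step) is the paper's $k+1$, so after your reindex $k=\ell-1$ the two computations coincide term by term.
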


\begin{proof}
From the definition of $X_n (1) $ and
 \eqref{sigma_p} it follows that
\begin{equation}\label{sigma1}
\sigma_1(n,z)=\sum_{x\in X_n(1)} h(x,z)= \sum_{k=2}^{n-1} h(x_k,z),
\end{equation}
where $x_k$ denotes the walk with $(k+1)$'th step equal to -2, i.e.,
\begin{equation*}
x_k(t)=
\begin{cases}
~~2 & \text{if } t\neq k \\
-2 & \text{if } t=k
\end{cases},
\quad 1\leq t \leq n+2.
\end{equation*}
Now, we figure out the connection between vertices of $\xi$ and $x_k$
as follows:
\begin{equation*}
j_\alpha(x_k)= \begin{cases} j_\alpha(\xi), & 1\leq \alpha\leq k,\\
j_{k-1}(\xi) &   \alpha = k+1,\\
j_{\alpha-2}(\xi) &  k+2  \leq \alpha \leq n=2.
\end{cases}
\end{equation*}
Therefore, by \eqref{h(x,z)}
\begin{equation} \label{b12}
h(x_k,z)=
h(\xi,z)\frac{a^2}{(n^2-[j_{k-1}(\xi)]^2+z)(n^2-[j_{k}(\xi)]^2+z)}.
\end{equation}
Since $j_k(\xi)=-n+2k, \; k=2,\ldots,n-1,$ (\ref{sigma1}) and
(\ref{b12}) imply (\ref{r1}).
\end{proof}

\begin{Lemma}
\label{lemRR} In the above notations, if $z=O(1/n^2)$ then
\begin{equation}
\label{r12} \Phi (n,z) = \Phi (n,0) + O \left( 1/n^4 \right)
\end{equation}
and
\begin{equation}
\label{r13} \Phi^*(n,z) := \sum_{k=2}^{n-1} |\varphi_k (n,z)| = \Phi
(n,0) + O \left( 1/n^4 \right).
\end{equation}
Moreover,
\begin{equation}\label{r11}
\Phi (n,0) = \frac{a^2}{8n^2} + \frac{a^2\log n}{4n^3} +
\frac{a^2(\gamma-1)}{4n^3}  + O \left( 1/n^4 \right).
\end{equation}
\end{Lemma}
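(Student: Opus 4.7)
My plan is to compute $\Phi(n,0)$ explicitly via partial fractions and harmonic-number asymptotics to obtain (\ref{r11}), and then deduce (\ref{r12}) and (\ref{r13}) as routine perturbation bounds around $z=0$.

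Setting $u := 4k(n-k) = n^2 - (-n+2k)^2$ and $v := 4(k-1)(n-k+1) = n^2 - (-n+2k-2)^2$, I would write $\varphi_k(n,z) = a^2/[(u+z)(v+z)]$, so at $z=0$
$$
\varphi_k(n,0) = \frac{a^2}{16\,k(k-1)(n-k)(n-k+1)}.
$$
For (\ref{r11}) I would apply $\tfrac{1}{k(k-1)} = \tfrac{1}{k-1} - \tfrac{1}{k}$ and the analogous identity for $\tfrac{1}{(n-k)(n-k+1)}$, producing four terms of the form $\tfrac{1}{j(m-j)}$ with $m \in \{n-1, n, n+1\}$. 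Each such term splits as $\tfrac{1}{m}\bigl(\tfrac{1}{j} + \tfrac{1}{m-j}\bigr)$, so the $k$-dependence separates additively; summing $\sum_{k=2}^{n-1}$ then reduces the whole expression to a simple linear combination of $H_{n-2}/(n-1)$, $H_{n-2}/n$, $H_{n-1}/n$, $(H_{n-1}-1)/(n+1)$. Collecting common factors, one can rewrite this as $\frac{H_{n-2}}{n(n-1)} - \frac{H_{n-1}-1}{n(n+1)}$; expanding $\tfrac{1}{n(n\pm 1)} = \tfrac{1}{n^2} \mp \tfrac{1}{n^3} + O(1/n^4)$ and inserting (\ref{HarmonicSum}) then yields (\ref{r11}). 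The main bookkeeping obstacle is verifying that no $\log n/n^4$ term survives: the spurious $\log n$ contributions at order $1/n^4$ must be checked to cancel, which they do because the $\log n$-piece of $H_{n-2}+H_{n-1}$ contributes only at order $1/n^3$.

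For (\ref{r12}), I would use the uniform lower bounds $u, v \geq 4(n-1)$ (valid for $2\leq k\leq n-1$), so that for $|z| = O(1/n^2)$ and $n$ large, $|u+z| \geq u/2$ and $|v+z| \geq v/2$. The algebraic identity
$$
\varphi_k(n,z) - \varphi_k(n,0) = -\frac{a^2 z\,(u+v+z)}{uv(u+z)(v+z)}
$$
then gives $|\varphi_k(n,z) - \varphi_k(n,0)| \leq C|z|(u+v)/(u^2 v^2)$. The symmetry $k \leftrightarrow n+1-k$ exchanges $u$ and $v$, reducing $\sum_k (u+v)/(u^2 v^2)$ to $2\sum_k 1/(uv^2)$, and this I would estimate by splitting into an edge regime ($k = O(1)$ or $n-k = O(1)$, contributing $O(1/n^3)$) and a bulk regime ($k$ and $n-k$ both of order $n$, contributing only $O(1/n^5)$). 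Combined with $|z| = O(1/n^2)$, this yields $\Phi(n,z) - \Phi(n,0) = O(1/n^5)$, which is stronger than (\ref{r12}).

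The argument for (\ref{r13}) is almost identical: the triangle inequality $\big||\varphi_k(n,z)| - |\varphi_k(n,0)|\big| \leq |\varphi_k(n,z) - \varphi_k(n,0)|$ yields $\Phi^*(n,z) = \sum_k|\varphi_k(n,0)| + O(1/n^5)$, and since $uv > 0$ each $\varphi_k(n,0)$ has the common argument $2\arg a$, so $\sum_k|\varphi_k(n,0)|$ agrees with $\Phi(n,0)$ up to a unit-modulus prefactor which is absorbed into the overall $\pm$ sign of (\ref{BF}). The real work of the lemma is the delicate bookkeeping in (\ref{r11}); once the summation estimate $\sum_k (u+v)/(u^2 v^2) = O(1/n^3)$ has been set up, (\ref{r12}) and (\ref{r13}) are immediate.
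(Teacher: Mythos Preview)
Your approach is essentially the paper's, with cosmetic differences. For \eqref{r11} the paper factors $k(n-k)$ and $(k-1)(n+1-k)$ first via $\tfrac{1}{k(n-k)}=\tfrac{1}{n}(\tfrac{1}{k}+\tfrac{1}{n-k})$, arriving at four sums $D_1,\ldots,D_4$; you instead factor $k(k-1)$ and $(n-k)(n-k+1)$ first and then recombine into the compact form $\tfrac{H_{n-2}}{n(n-1)}-\tfrac{H_{n-1}-1}{n(n+1)}$, which is correct and slightly cleaner. For \eqref{r12}--\eqref{r13} the paper does not do an edge/bulk split: it just uses $\varphi_k(n,z)/\varphi_k(n,0)=1+O(1/n^3)$ together with the crude uniform bound $\varphi_k(n,0)=O(1/n^2)$, getting $\varphi_k(n,z)-\varphi_k(n,0)=O(1/n^5)$ termwise and hence $O(1/n^4)$ after summing $n-2$ terms. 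Your finer analysis yielding $O(1/n^5)$ for the whole sum is valid but unnecessary.

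One genuine slip: your explanation of \eqref{r13} for complex $a$ is wrong. The discrepancy between $\sum_k|\varphi_k(n,0)|=|a|^2\cdot(\text{positive})$ and $\Phi(n,0)=a^2\cdot(\text{same positive})$ is a fixed unimodular factor $a^2/|a|^2$, which has nothing to do with the $\pm$ in \eqref{BF} (that sign comes from the two branches \eqref{E1}--\eqref{E2} of the basic equation). The paper's statement \eqref{r13} is itself imprecise on this point; what is actually used downstream is only $\Phi^*(n,z)=O(1/n^2)$, for which the phase is irrelevant, so the argument goes through regardless.
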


\begin{proof}
Since
$$
\frac{\varphi_k (n,z)}{\varphi_k (n,0)} = \left[ 1+
\frac{z}{n^2-(-n+2k)^2}\right]^{-1} \left[ 1+
\frac{z}{n^2-(-n+2k-2)^2}\right]^{-1},
$$
it is easily seen that
$$
\varphi_k (n,z)/\varphi_k (n,0) = 1+  O \left( 1/n^3 \right) \quad
\text{if} \;\; z=O(1/n^2).
$$
On the other hand, $\varphi_k (n,0)=O (1/n^2),$ so it follows that
$$
\varphi_k (n,z)-\varphi_k (n,0) =\varphi_k (n,0)\, O \left( 1/n^3
\right) = O \left( 1/n^5 \right) \;\; \text{if} \;\; z=O \left( 1/n^2
\right).
$$
Therefore, we obtain that
$$
\sum_{k=2}^{n-1} |\varphi_k (n,z)-\varphi_k (n,0)| = O \left( 1/n^4
\right) \quad \text{if} \;\; z=O(1/n^2).
$$
The latter sum dominates both $|\Phi (n,z)- \Phi (n,0)|$ and $|\Phi^*
(n,z)- \Phi (n,0)|. $ Hence,  (\ref{r12}) and (\ref{r13}) hold.\\

Next we prove (\ref{r11}). Since
\begin{equation*}
\Phi (n,0)=\sum_{k=2}^{n-1} \frac{a^2}{16(k-1)k(n-k)(n+1-k)},
\end{equation*}
by using the identities
\begin{equation*}
\frac{1}{k(n-k)}= \frac {1}{n} \left( \frac{1}{k}+\frac{1}{n-k} \right),
\quad \frac{1}{(k-1)(n+1-k)}= \frac {1}{n} \left( \frac{1}{k-1}+\frac{1}{n+1-k} \right)
\end{equation*}
we obtain
\begin{equation}
\label{Di} \Phi (n,0) = \frac{a^2}{16n^2} \sum_{i=1}^4 D_i (n),
\end{equation}
where
\begin{equation*}
D_1(n)= \sum_{k=2}^{n-1} \frac{1}{k(k-1)},
\quad D_2(n)= \sum_{k=2}^{n-1} \frac{1}{(n-k)(n+1-k)},
\end{equation*}
\begin{equation*}
D_3(n)= \sum_{k=2}^{n-1} \frac{1}{k(n+1-k)},
\quad D_4(n)= \sum_{k=2}^{n-1} \frac{1}{(k-1)(n-k)}.
\end{equation*}
The change of summation index  $m=n+1-k$ shows that $D_2(n)=D_1(n),$
and we have
\begin{equation}\label{D_1}
D_1(n)=\sum_{k=2}^{n-1} \left (\frac{1}{k-1}- \frac{1}{k} \right )=1-
\frac{1}{n-1}= 1 - \frac{1}{n} + O \left( \frac{1}{n^2}\right).
\end{equation}
Moreover, since
\begin{equation*}
D_3(n)= \frac{1}{n+1} \left( \sum_{k=2}^{n-1} \frac{1}{k} +
\sum_{k=2}^{n-1} \frac{1}{n+1-k} \right) =\frac{2}{n+1}
\sum_{k=2}^{n-1} \frac{1}{k},
\end{equation*}
by \eqref{HarmonicSum} we obtain that
\begin{equation}\label{D_3}
D_3(n)=\frac{2 \log n}{n} +\frac{2(\gamma -1)}{n}-
\frac{2 \log n}{n^2} + O \left( \frac{1}{n^2}\right).
\end{equation}
Similarly,
\begin{equation*}
D_4(n)= \frac{1}{n-1} \left( \sum_{m=1}^{n-2} \frac{1}{m} +
\sum_{m=1}^{n-2} \frac{1}{n-m-1} \right)= \frac{2}{n-1}
\sum_{m=1}^{n-2} \frac{1}{m},
\end{equation*}
and \eqref{HarmonicSum} leads to
\begin{equation}\label{D_4}
D_4(n)=\frac{2 \log n}{n} + \frac{2\gamma}{n} +
\frac{2\log n}{n^2} +  O \left( \frac{1}{n^2}\right).
\end{equation}
Hence, in view of \eqref{Di}--\eqref{D_4}, we obtain \eqref{r11}.
\end{proof}

\begin{Proposition}
We have
\begin{equation}\label{beta}
\beta_n(z_n^\pm)=\sigma_0(n,0) \left[  1 + \frac{a^2}{8n^2} -
\frac{a^2}{4n^3} + O \left( \frac{1}{n^4}\right) \right] .
\end{equation}
\end{Proposition}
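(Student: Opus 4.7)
The plan is to split
\[ \beta_n(z_n^\pm) \;=\; \sigma_0(n,z_n^\pm) + \sigma_1(n,z_n^\pm) + \sum_{p\ge 2}\sigma_p(n,z_n^\pm), \]
compute the first two terms explicitly using Lemmas~\ref{lemb2}--\ref{lemRR}, and absorb the tail into the $O(1/n^4)$ error. I expect the $\log n$ and Euler $\gamma$ contributions coming from $\sigma_0$ and $\sigma_1$ to cancel when added, leaving precisely the coefficients in \eqref{beta}.

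Lemma~\ref{lemb2} already gives $\sigma_0(n,z_n^\pm) = \sigma_0(n,0)[1+A(n)]$ with $A(n) = -\tfrac{a^2\log n}{4n^3}-\tfrac{a^2\gamma}{4n^3}+O(1/n^4)$. By Lemma~\ref{lemR}, $\sigma_1(n,z) = \sigma_0(n,z)\Phi(n,z)$; since $z_n^\pm = O(1/n^2)$ by Lemma~\ref{lemA}, Lemma~\ref{lemRR} yields $\Phi(n,z_n^\pm) = B(n)+O(1/n^4)$ with $B(n) = \tfrac{a^2}{8n^2}+\tfrac{a^2\log n}{4n^3}+\tfrac{a^2(\gamma-1)}{4n^3}$. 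Since $A(n)=O(\log n/n^3)$ and $B(n)=O(1/n^2)$, the cross term $A(n)B(n)$ is $O(1/n^4)$, so
\[ \sigma_0(n,z_n^\pm)+\sigma_1(n,z_n^\pm) = \sigma_0(n,0)\bigl[1+A(n)+B(n)+O(1/n^4)\bigr]. \]
A direct calculation then shows $A(n)+B(n) = \tfrac{a^2}{8n^2} - \tfrac{a^2}{4n^3} + O(1/n^4)$: the $\log n$ terms cancel exactly, and the $\gamma$ contributions combine into the $-a^2/(4n^3)$ coefficient.

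The principal obstacle is the tail bound $\sum_{p\ge 2}|\sigma_p(n,z_n^\pm)| = \sigma_0(n,0)\,O(1/n^4)$. I would attack it by extending the walk factorization from Lemma~\ref{lemR}: any $x\in X_n(p)$ can be viewed as the monotone walk $\xi$ with $p$ back-and-forth excursions inserted, and each insertion contributes exactly one extra factor $a^2$ to the numerator of $h(x,z)$ and two extra denominator factors of the form $n^2-j^2+z$. Parametrising admissible walks by their $p$-tuples of insertion sites, one expects an estimate of the form
\[ |\sigma_p(n,z_n^\pm)| \;\le\; \sigma_0(n,0)\,\bigl(C\,\Phi^*(n,z_n^\pm)\bigr)^p \;=\; \sigma_0(n,0)\,O(n^{-2p}) \]
uniformly in $p$ (using the bound $\Phi^*(n,z_n^\pm)=O(1/n^2)$ from Lemma~\ref{lemRR}), so that the sum over $p\ge 2$ is a geometric series with first term $O(1/n^4)$. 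The technically delicate point in this argument is the combinatorial bookkeeping: distinct walks in $X_n(p)$ can arise from several different insertion sequences when excursions nest, abut, or share endpoints, and one must identify each walk with a canonical list of sites and control the resulting multiplicities in order to obtain the clean geometric bound above. Once this is secured, combining the three contributions yields \eqref{beta}.
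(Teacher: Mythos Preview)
Your treatment of $\sigma_0(n,z_n^\pm)+\sigma_1(n,z_n^\pm)$ is exactly what the paper does: combine Lemma~\ref{lemb2} with Lemma~\ref{lemR} and Lemma~\ref{lemRR}, observe that the $\log n$ and Euler~$\gamma$ contributions cancel, and read off $1+\tfrac{a^2}{8n^2}-\tfrac{a^2}{4n^3}+O(1/n^4)$.

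For the tail $\sum_{p\ge 2}\sigma_p$, your intuition is right but you have not yet found the device that eliminates the ``combinatorial bookkeeping'' you flag. The paper avoids decomposing a walk in $X_n(p)$ into $p$ insertions all at once; instead it works \emph{recursively}. For $p\ge 1$ define
\[
\varphi:\;X_n(p)\longrightarrow X_n(p-1)\times I,\qquad I=\{-n+4,\ldots,n-2\},
\]
by locating the \emph{first} occurrence of a consecutive pair $(+2,-2)$ in $x$ (it always exists: the first step of any admissible walk is $+2$, so the first $-2$ is immediately preceded by $+2$), deleting that pair to obtain $\tilde x\in X_n(p-1)$, and recording the vertex $j$ at the top of the deleted cycle. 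One checks directly that $h(x,z)=h(\tilde x,z)\,\varphi_k(n,z)$ with $j=-n+2k$, and that $\varphi$ is \emph{injective} (knowing $j$ fixes $k$, and the first $k$ steps of $x$ are forced to be $+2$, so $x$ is recovered from $(\tilde x,j)$). Injectivity gives immediately
\[
\sigma_p^*(n,z)\;\le\;\sigma_{p-1}^*(n,z)\cdot\Phi^*(n,z),
\]
whence $\sigma_p^*(n,z_n^\pm)\le |\sigma_0(n,z_n^\pm)|\bigl(\Phi^*(n,z_n^\pm)\bigr)^p=|\sigma_0(n,z_n^\pm)|\,O(n^{-2p})$ by Lemma~\ref{lemRR}. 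No multiplicity analysis or canonical $p$-tuple is needed; the nesting/abutting issue you worried about simply does not arise, because one peels off a single cycle per step and only uses an inequality. With this in hand your argument is complete and identical to the paper's.
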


\begin{proof}
From (\ref{b2}), (\ref{r1}), (\ref{r12}) and (\ref{r11}) it follows
immediately that
$$
\sigma_1(n,z_n^\pm)+\sigma_0(n,z_n^\pm) = \sigma_0(n,0) \left[  1 +
\frac{a^2}{8n^2} - \frac{a^2}{4n^3} + O \left( \frac{1}{n^4}\right)
\right] .
$$
Since $\beta_n (z) = \sum_{p=0}^\infty \sigma_p (n,z), $ in view of
(\ref{b2}) to complete the proof it is enough to show that
\begin{equation}
\label{S2} \sum_{p=2}^\infty \sigma_p(n,z_n^\pm)= \sigma_0(n,z_n^\pm)
\, O\left(\frac{1}{n^4}\right).
\end{equation}

Next we prove (\ref{S2}). Recall that $\sigma_p (n,z) = \sum_{x\in
X_n(p)} h(x,z).$ Now we set
$$
\sigma^*_p (n,z) = \sum_{x\in X_n(p)} |h(x,z)|.
$$
We are going to show that there is an absolute constant $C>0$ such
that
\begin{equation}
\label{S3} \sigma^*_p (n,z_n^\pm) \leq \sigma^*_{p-1} (n,z_n^\pm)
\cdot \frac{C}{n^2}, \quad p\in \mathbb{N}, \;\; n \geq N_0.
\end{equation}
Since $\sigma_0 (n,z)$ has one term only, we have $\sigma^*_0
(n,z)=|\sigma_0 (n,z)|.$

Let $p \in \mathbb{N}.$ To every walk $x \in X_n(p)$ we assign a pair
$(\tilde{x},j)$, where $\tilde{x} \in X_n(p-1)$ is the walk that we
obtain after dropping the first cycle $ \{ +2, -2 \}$ from $x,$ and
$j$ is the vertex of $x$ where the first negative step of $x$ is
performed. In other words, we consider the map
\begin{equation*}
\varphi: X_n(p)\longrightarrow X_n(p-1) \times I, \qquad I=\{-n+4, -n+6, \ldots, n-2\},
\end{equation*}
defined by $\varphi(x)=(\tilde{x},j)$, where
\begin{equation*}
\tilde{x}(t)=
\begin{cases}
x(t) & \text{if } 1\leq t\leq k-1 \\
x(t+2) & \text{if } k \leq t \leq n+2p-2
\end{cases},
\end{equation*}
where $k=\min\{t:x(t)=2,~x(t+1)=-2\}$ and $j=-n+2k$.

The map $\varphi$ is clearly injective, and moreover, we have
\begin{equation} \label{S5}
h(x,z)= h(\tilde{x},z)\frac{a^2}{(n^2-j^2+z)(n^2-(j-2)^2+z)}.
\end{equation}
Since the mapping $\varphi$ is injective, from (\ref{r3}),
(\ref{r13}) and \eqref{S5} it follows that
\begin{equation}\label{S6}
\sigma^*_p(n,z) \leq \sigma^*_{p-1}(n,z)\cdot \Phi^*(n,z).
\end{equation}
Hence, by \eqref{r13} and (\ref{r11}), we obtain that (\ref{S3})
holds.

From (\ref{S3}) it follows (since $\sigma^*_0 (n,z_n^\pm)=|\sigma_0
(n,z_n^\pm)|$) that
$$
\sigma^*_p(n,z_n^\pm) \leq |\sigma_0 (n,z_n^\pm)| \cdot \left
(\frac{C}{n^2}\right)^p.
$$
Hence, (\ref{S2}) holds, which completes the proof.
\end{proof}

\begin{Theorem}
The  Mathieu operator
\begin{equation*}
L(y)=-y''+2a \cos{(2x)}y, \quad a\in \mathbb{C}, \;\; a\neq 0,
\end{equation*}
considered with periodic or anti-periodic boundary conditions has,
close to $n^2$ for large enough $n$, two periodic (if $n$ is even) or
anti-periodic (if $n$ is odd) eigenvalues $\lambda_n^-$,
$\lambda_n^+$. For fixed nonzero $a\in \mathbb{C}$,
\begin{equation}\label{G}
\lambda_n^+ -\lambda_n^-  = \pm \frac{8(a/4)^n}{[(n-1)!]^2} \left[1 -
\frac{a^2}{4n^3}+ O \left( \frac{1}{n^4}\right)\right], \quad n \to
\infty.
\end{equation}
\end{Theorem}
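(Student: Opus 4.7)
The plan is to reduce the basic equation \eqref{be} to its two scalar factors, evaluate each at $z_n^\pm$, and then combine the resulting pair of identities to express $\gamma_n = z_n^+ - z_n^-$ in terms of the two already-computed ingredients \eqref{D1} and \eqref{beta}.

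First I would take square roots in \eqref{be}: since both $z_n^+$ and $z_n^-$ solve $(z-\alpha_n(z))^2 = \beta_n(z)^2$, we obtain
\begin{equation*}
z_n^+ - \alpha_n(z_n^+) = \varepsilon\,\beta_n(z_n^+), \qquad z_n^- - \alpha_n(z_n^-) = -\varepsilon\,\beta_n(z_n^-),
\end{equation*}
for some $\varepsilon \in \{-1,+1\}$; the two signs must be opposite, because otherwise subtracting and using \eqref{D1} and \eqref{beta} would force $\gamma_n = 0$, contradicting the nonvanishing leading term in \eqref{beta} (recall $\sigma_0(n,0) \neq 0$). Subtracting the two scalar identities then gives
\begin{equation*}
\gamma_n - \bigl(\alpha_n(z_n^+) - \alpha_n(z_n^-)\bigr) = \varepsilon \bigl(\beta_n(z_n^+) + \beta_n(z_n^-)\bigr).
\end{equation*}

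Next I would substitute \eqref{D1} on the left side, yielding
\begin{equation*}
\gamma_n\left[1 + \frac{a^2}{8n^2} + O\!\left(\frac{1}{n^4}\right)\right] = \pm\bigl(\beta_n(z_n^+) + \beta_n(z_n^-)\bigr),
\end{equation*}
and substitute \eqref{beta} on the right side, which gives
\begin{equation*}
\beta_n(z_n^+) + \beta_n(z_n^-) = 2\sigma_0(n,0)\left[1 + \frac{a^2}{8n^2} - \frac{a^2}{4n^3} + O\!\left(\frac{1}{n^4}\right)\right].
\end{equation*}
Dividing and expanding the reciprocal $(1 + a^2/(8n^2) + O(1/n^4))^{-1} = 1 - a^2/(8n^2) + O(1/n^4)$ causes the $a^2/(8n^2)$ corrections to cancel exactly, leaving
\begin{equation*}
\gamma_n = \pm\,2\sigma_0(n,0)\left[1 - \frac{a^2}{4n^3} + O\!\left(\frac{1}{n^4}\right)\right].
\end{equation*}
Finally, inserting $\sigma_0(n,0) = 4(a/4)^n/[(n-1)!]^2$ from \eqref{Sigma0} yields \eqref{G}.

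I do not expect any real obstacle at this stage: all of the delicate asymptotic work has been done in Lemmas~\ref{lemA}, \ref{lemR}, \ref{lemRR} and in the preceding proposition on $\beta_n(z_n^\pm)$. The only subtle point is the cancellation of the $a^2/(8n^2)$ term between the denominator coming from \eqref{D1} and the numerator coming from \eqref{beta}; this cancellation is in fact the structural reason why the first nontrivial correction to the Harrell--Avron--Simon formula is of order $1/n^3$ rather than $1/n^2$, and it is what makes the explicit constant $-a^2/4$ appear in \eqref{G}.
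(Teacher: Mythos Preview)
Your proof follows the same route as the paper's: factor the basic equation \eqref{be}, subtract the two resulting scalar identities, insert \eqref{D1} and \eqref{beta}, and observe the cancellation of the $a^2/(8n^2)$ terms before invoking \eqref{Sigma0}. The algebra from that point on is identical to the paper's.

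The one place where your argument is looser than the paper's is the justification that the two signs must be opposite. As written, the claim ``otherwise subtracting and using \eqref{D1} and \eqref{beta} would force $\gamma_n=0$, contradicting the nonvanishing leading term in \eqref{beta}'' is not quite complete. If the signs were equal, subtracting gives $\gamma_n\bigl[1+O(1/n^2)\bigr]=\beta_n(z_n^+)-\beta_n(z_n^-)$, and \eqref{beta} only tells you the right-hand side is $\sigma_0(n,0)\cdot O(1/n^4)$, not zero; moreover, even if you do obtain $\gamma_n=0$, nothing in \eqref{beta} is directly contradicted by $z_n^+=z_n^-$. The paper handles this step by applying Rouch\'e's theorem to each factor $z-\alpha_n(z)\mp\beta_n(z)$ separately (comparing with $z$ on $|z|=1$ via \eqref{BE1} and \eqref{In7}), so that each factor has exactly one root in the disc and the two roots $z_n^\pm$ are forced to land one in each. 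Your argument can be repaired the same way, or alternatively by noting from Cauchy's estimate and \eqref{BE1} that $\beta_n(z_n^+)-\beta_n(z_n^-)=\gamma_n\cdot o(1)$, which does force $\gamma_n=0$; then $\beta_n(z^*)\neq 0$ by \eqref{beta} means $z^*$ would have to be a double root of a single factor, impossible since that factor has derivative $1+o(1)$.
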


\begin{proof} The basic equation \eqref{be}
splits into two equations
\begin{align}\label{E1}
z- \alpha_n(z)- \beta_n(z)&=0, \\ \label{E2} z- \alpha_n(z)+
\beta_n(z)&=0.
\end{align}
In view of  (\ref{BE1}) and  \eqref{In7}, it follows that  for large
enough $n$
\begin{equation*}
|z- \alpha_n(z) \pm \beta_n(z)| < |z| \quad \text{if} \;\;  |z|=1.
\end{equation*}
Hence, for large enough $n,$ each of the equations \eqref{E1} and
\eqref{E2} has only one root in the unit disc due to Rouche's
theorem.

On the other hand, by Lemmas~\ref{loc} and \ref{lem1}, for large
enough $n$ the basic equation has exactly two roots $z_n^-, z_n^+$ in
the unit disc, so either $z_n^-$ is the root of (\ref{E1}) and $
z_n^+$ is the root of (\ref{E2}), or $z_n^+$ is the root of
(\ref{E1}) and $ z_n^-$ is the root of (\ref{E2}). Therefore,  we
obtain
\begin{equation*}
z_n^+ - z_n^- -[\alpha_n(z_n^+)- \alpha_n(z_n^-)]  = \pm [
\beta_n(z_n^+) + \beta_n(z_n^-)].
\end{equation*}
Now, \eqref{D1} and \eqref{beta} imply, with $\gamma_n= \lambda_n^+ -
\lambda_n^-,$
\begin{equation*}
\gamma_n \left[1 +\frac{a^2}{8n^2} + O \left( \frac{1}{n^4}\right)
\right] = \pm 2\sigma_0(n,0) \left[1+ \frac{a^2}{8n^2} -
\frac{a^2}{4n^3} + O \left( \frac{1}{n^4} \right)  \right].
\end{equation*}
Therefore,
\begin{align*}
\gamma_n &= \pm 2\sigma_0(n,0) \left[1+ \frac{a^2}{8n^2} -
\frac{a^2}{4n^3} + O \left( \frac{1}{n^4} \right)  \right]
\left[1 -\frac{a^2}{8n^2} + O \left( \frac{1}{n^4}\right) \right] \\
&= \pm 2\sigma_0(n,0) \left[1 - \frac{a^2}{4n^3} + O \left(
\frac{1}{n^4}\right)\right].
\end{align*}
Hence, in view of \eqref{Sigma0}, \eqref{G} holds.
\end{proof}

\end{document}